\newtheorem*{remark}{Remark}
\newtheorem{theorem}{Theorem}[section]
\newaliascnt{lemma}{theorem}
\newtheorem{lemma}[lemma]{Lemma}
\newaliascnt{proposition}{theorem}
\newaliascnt{corollary}{theorem}
\newtheorem{corollary}[corollary]{Corollary}
\newaliascnt{conjecture}{theorem}
\newtheorem{conjecture}[conjecture]{Conjecture}
\tikzset{>=stealth'}
\begin{document}

\title[]{On the hot spots conjecture for acute triangles}
\begin{abstract}
  We show that the hot spots conjecture of J. Rauch holds for acute triangles if one of the angles is not larger than $\pi/6$. More precisely, we show that the second Neumann eigenfunction on those acute triangles has no maximum or minimum inside the domain. We first simplify the problem by showing that absence of critical points on two sides implies no critical points inside a triangle. This result applies to any acute triangle and might help prove the conjecture for arbitrary acute triangles. Then we show that there are no critical points on two sides assuming one small angle. We also establish simplicity for the second Neumann eigenvalue for all non-equilateral triangles.
\end{abstract}

\author[]{Bart{\l}omiej Siudeja}
\address{Department of Mathematics, Univ. of Oregon, Eugene, OR 97403, U.S.A.}
\email{Siudeja\@@uouregon.edu}
\date{\today}

\keywords{Hot spot, triangle, kite, nodal line, critical point}
\subjclass[2010]{\text{Primary 35B38, Secondary 35J20, 35P15}}

\maketitle

\section{Introduction}

The hot spots conjecture, posed by J. Rauch, states that the hottest point on an insulated plate with ``almost arbitrary'' initial heat distribution moves (as time passes) toward the boundary of the plate. This physical phenomenon can be mathematically formalized using a Laplace eigenfunction problem. More precisely, consider the eigenvalue problem with Neumann (natural) boundary condition
\begin{align*}
  \Delta \varphi&=-\mu \varphi,\text{ on } D\\
  \frac{\partial \varphi}{\partial n}&=0,\text{ on }\partial D.
\end{align*}
It is well known that for nice enough bounded domains $D$ (in particular convex, or with smooth boundary) there exists an increasing sequence of eigenvalues satisfying
\begin{align*}
  0=\mu_1<\mu_2\le \mu_3\le \dots \to \infty.
\end{align*}
The strongest form of the hot spots conjecture states 
\begin{conjecture}\label{conj}
The global maximum and global minimum for any eigenfunction belonging to $\mu_2$ is not attained inside the domain. 
\end{conjecture}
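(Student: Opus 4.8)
The plan is to target not the general conjecture — which appears to be well beyond reach — but the class of domains this paper treats, acute triangles with an angle at most $\pi/6$, and to whittle the assertion down to something checkable on a one-parameter family of triangles.

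First I would reformulate the goal. If the second Neumann eigenfunction $\varphi_2$ (normalized to be unique up to scaling) has no critical point in the open triangle, then $\nabla\varphi_2$ never vanishes there, so $\varphi_2$ has no interior local extremum and its global maximum and minimum must occur on the boundary; hence it suffices to rule out interior critical points. For $\varphi_2$ to be well defined up to a constant I first need $\mu_2$ simple, and I would prove this for every non-equilateral triangle by a deformation argument in the shape parameter: use the symmetric/antisymmetric splitting available on isosceles triangles to track the two candidate eigenvalues, and show that the coincidence $\mu_2=\mu_3$ is destroyed as soon as one leaves the equilateral triangle, whose extra symmetry is precisely what makes that eigenvalue genuinely double.

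The core reduction is: if $\varphi_2$ has no critical point on two of the three sides, then it has none in the interior — and this should hold for \emph{every} acute triangle, not just the thin ones. The mechanism is that $\partial_x\varphi_2$ and $\partial_y\varphi_2$ again solve $\Delta u=-\mu_2 u$; along a side the Neumann condition $\partial_n\varphi_2=0$ kills the normal component of $\nabla\varphi_2$ there, so on that side $\varphi_2$ has a critical point exactly where the tangential derivative also vanishes, and differentiating the boundary condition along the side shows the tangential derivative satisfies a Neumann condition and the normal derivative a Dirichlet condition on it. I would then run an index count for the gradient field $\nabla\varphi_2$, which is tangent to all three sides: interior critical points have index $+1$ (maxima, minima) or $-1$ (saddles), the global max and min live in the two nodal domains separated by a single nodal arc, and non-vanishing of the tangential derivative on two sides fixes the boundary contribution to the total index so that no interior saddle survives. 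The first real obstacle is the behavior at the vertices: since $\varphi_2$ looks like $r^{\pi/\alpha}$ times an angular factor at a corner of opening $\alpha<\pi/2$, the gradient degenerates there, the corners act as zeros of the field, and their index contributions — with $\alpha$ not of the form $\pi/n$ — must be computed carefully and fed into the count.

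Finally, for an acute triangle with an angle $\le\pi/6$ I would show directly that $\varphi_2$ has no critical point on the two sides meeting at the small vertex. Such a triangle is thin after rescaling, and $\varphi_2$ is then close to the one-dimensional eigenfunction $\cos(\pi x/L)$ in the long direction, which is strictly monotone; I would promote this to a genuine sign condition on the relevant derivative along those two sides via quantitative control of $\varphi_2$, $\nabla\varphi_2$, and $\mu_2$ — e.g.\ through domain monotonicity and comparison with the 30--60--90 triangle, which, having angles $\pi/2,\pi/3,\pi/6$, reflects to tile the plane and is therefore explicitly understood. I expect this last step to be the main technical burden: the estimates are easy for very thin triangles but must be made sharp enough to cover the entire range up to the borderline angle $\pi/6$, with the vertex analysis in the reduction step a close second.
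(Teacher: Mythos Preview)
Your three-step architecture --- simplicity, reduction to the boundary, then ruling out critical points on two sides --- matches the paper's, but the execution of each step is quite different, and the last step has a genuine gap.

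For the reduction (no critical points on two sides $\Rightarrow$ no interior critical points), the paper does \emph{not} use an index count. It places the third side vertically, so $u_x=0$ there, and then shows directly that one of $u_x,u_y$ has a fixed sign on the whole boundary. If $u_x$ has the same sign on the two sloped sides, then $u_x\ge 0$ on $\partial T$ and the no-loop lemma for solutions of $\Delta v=-\mu_2 v$ forces $u_x>0$ inside. If the signs differ, then the acuteness of the angles forces $u_y$ to have a common sign on the sloped sides; since $(u_y)_x=(u_x)_y=0$ on the vertical side, $u_y$ satisfies Neumann there, and a variant of the no-loop argument gives $u_y>0$ inside. This bypasses the vertex-index computation you flag as delicate, and it gives more: in each case one partial derivative is strictly signed throughout, which pins the extrema to specific vertices or sides. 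Your index approach is not obviously wrong, but it is vaguer: an index balance alone does not preclude an interior max--saddle pair, and you would still need a separate argument to exclude that.

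The real gap is your plan for showing no critical points on two sides when one angle is at most $\pi/6$. Triangles with an angle equal to $\pi/6$ need not be thin at all --- the $(\pi/6,5\pi/12,5\pi/12)$ isosceles triangle and the $30$--$60$--$90$ triangle both sit at the boundary of your class and are far from the regime where $\varphi_2\approx\cos(\pi x/L)$. A perturbative comparison with the one-dimensional eigenfunction cannot be sharpened to cover this range; you acknowledge this is the main burden, but there is no mechanism in your outline that would carry it. The paper's device is entirely different: reflect $T$ across one side to form a kite $K$, prove (via explicit upper and lower eigenvalue bounds) that $\mu_2(K)$ has a \emph{symmetric} eigenfunction, and then run Miyamoto's auxiliary-function argument with $\psi(x,y)=\cos(\sqrt{\mu}\,y)$. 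A hypothetical boundary critical point $p$ makes $u=\varphi(p)\psi-\varphi$ have a degenerate zero at $p$, forcing at least four nodal branches and two positive components of $\{u>0\}$; a carefully chosen test function built from these components then violates the variational characterization of $\mu_2(K)$. This contradiction is what eliminates critical points on the reflected side, and the $\pi/6$ threshold arises from the explicit inequality $3b^2\le 1-a+a^2$ needed for the kite's second mode to be symmetric --- not from any thinness of $T$.

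Simplicity is also handled differently: the paper proves $\mu_2+\mu_3>2\mu_2$ directly, bounding the left side below by the unknown-trial-function method and the right side above by explicit test functions, rather than by a deformation from the equilateral. Your deformation idea would require ruling out later crossings of $\mu_2$ and $\mu_3$, which is not addressed.
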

For discussion on various other formulations see the work of Ba\~nuelos and Burdzy \cite{BB99}. The conjecture is false for arbitrary sets as shown by Burdzy and Werner \cite{BW99}, but it is most likely true for convex domains. For the overview of the known results and counterexamples see \autoref{sechist}.

In this paper we are concerned with triangular domains. The conjecture is known only for obtuse, right and isosceles triangles. Surprisingly, it was open for any nonsymmetric acute triangle. Recently, acute triangles became the main subject of the Polymath7 project \cite{poly}, whose main goal is to give a proof in this special case, as well as establish new numerical and analytical tools for studying this and related problems.

We refine the method of boundary critical points used by Miyamoto \cite{Mi12} to give an analytic proof of the hot spots conjecture for isosceles triangles. As a result we resolve the conjecture for triangles with one small angle via a reduction to a problem on the boundary of the domain.

\subsection{Main results}
First we establish simplicity for $\mu_2$. This allows us to work with an essentially unique second eigenfunction. Simplicity is also important for numerical stability and theoretical estimates for eigenfunctions needed in the Polymath7 project \cite{poly}. We also use it in symmetry based arguments for triangles and kites.
\begin{theorem}\label{simple}
The second Neumann eigenvalue $\mu_2$ is simple for all non-equilateral triangles.
\end{theorem}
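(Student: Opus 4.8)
The plan is to treat triangles with a line of symmetry separately from scalene ones, using throughout that $\mu_2$ is described by the min--max principle, depends continuously on the triangle, and that at the equilateral triangle one has $\mu_2=\mu_3$ with a two--dimensional eigenspace carrying the standard representation of the dihedral symmetry group. Let $T$ be an isosceles, non--equilateral triangle with axis $\ell$, and split each Neumann eigenfunction into its even and odd parts under reflection across $\ell$. Even Neumann eigenfunctions of $T$ correspond to Neumann eigenfunctions of the half--triangle $T^{+}$ (a right triangle), and odd ones to eigenfunctions of $T^{+}$ satisfying the Dirichlet condition on $\ell\cap\partial T^{+}$ and the Neumann condition on the other two sides. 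Hence $\mu_2(T)=\min\{\tau_1,\sigma_2\}$, where $\tau_1$ is the first eigenvalue of this mixed problem on $T^{+}$ and $\sigma_2$ is the second Neumann eigenvalue of $T^{+}$. Since $\tau_1$ is the ground state of a connected mixed problem it is simple; $\sigma_2$ equals $\mu_2(T^{+})$, and in the range where $\sigma_2\le\tau_1$ its simplicity follows from the spectral structure of right triangles (for instance from a direct upper bound on $\sigma_2$ together with a lower bound on $\sigma_3$). Thus $\mu_2(T)$ can fail to be simple only when $\tau_1=\sigma_2$, i.e.\ at an even--odd crossing, and one excludes this away from the equilateral by a monotonicity in the aperture angle: $\tau_1-\sigma_2$ is monotone as the apex opens, with its unique zero at the equilateral.

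The scalene case is the heart of the argument. First one reduces the multiplicity of $\mu_2$ to two, combining Courant's nodal domain theorem with the known multiplicity bounds for Laplace eigenvalues on planar domains. Suppose then, for contradiction, that $\mu_2=\mu_3=\mu$ for a scalene triangle $T$, with eigenspace $E=\operatorname{span}\{\varphi,\psi\}$. For each $\theta$ the function $\varphi_\theta:=\cos\theta\,\varphi+\sin\theta\,\psi$ is a Neumann eigenfunction for $\mu_2$, and the nodal structure of second Neumann eigenfunctions on convex domains shows that its nodal set is a single simple arc meeting $\partial T$ in exactly two points $p_\theta,q_\theta$, which depend continuously on $\theta$ and avoid the vertices except for finitely many $\theta$. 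Because $\varphi_{\theta+\pi}=-\varphi_\theta$ has the same nodal set, the unordered pair $\{p_\theta,q_\theta\}$ returns to $\{p_0,q_0\}$ as $\theta$ traverses $[0,\pi]$. Tracking the two endpoints along $\partial T$, and recording how they move past the three vertices, produces a winding/parity invariant whose only admissible value forces $E$ to be invariant under a reflection of $T$ --- impossible for a scalene triangle. This contradiction gives simplicity.

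The main obstacle is precisely this last, scalene, step: converting the nodal--arc bookkeeping into a rigorous argument. One must control the endpoints $p_\theta,q_\theta$ as they approach each vertex (where the local form of a Neumann eigenfunction depends on the opening angle), exclude degeneracies of the nodal arc such as tangency to a side or collision of the two endpoints, and verify that the resulting topological invariant genuinely forces a reflection symmetry rather than merely restricting the endpoints' motion. A possible alternative is to exploit the real--analytic dependence of $\mu_2$ and $\mu_3$ on the two shape parameters, anchored at the isosceles locus treated above; but this route must likewise exclude accidental crossings, which for such a non--generic family does not come for free. By comparison, the multiplicity--$3$ reduction and the remaining isosceles computations are routine.
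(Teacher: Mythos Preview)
Your proposal is a plan rather than a proof, and you concede as much. The central gap is the scalene step: the assertion that tracking the nodal endpoints $\{p_\theta,q_\theta\}$ around $\partial T$ as $\theta$ runs over $[0,\pi]$ produces a winding/parity invariant that \emph{forces} the eigenspace $E$ to be invariant under a reflection of $T$ is never established. You list the obstacles yourself --- the local structure of Neumann eigenfunctions at vertices of varying aperture, possible tangencies or endpoint collisions, and whether the invariant genuinely implies a symmetry rather than merely constraining the motion --- but resolve none of them. Without this, there is no contradiction and hence no argument for scalene triangles. The isosceles part also rests on an unproved claim: that $\tau_1-\sigma_2$ is monotone in the apex angle with a unique zero at the equilateral. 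Eigenvalue branches need not vary monotonically under one-parameter deformations, and nothing you have written excludes an even--odd crossing away from the equilateral; ``monotonicity'' here is exactly the kind of statement that requires a quantitative bound, not an appeal to continuity.

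By contrast, the paper's proof is entirely quantitative and avoids nodal-line topology altogether. It shows $\mu_3>\mu_2$ for every non-equilateral acute triangle (obtuse and right triangles being already handled by Atar--Burdzy) by proving $\mu_2+\mu_3>2\mu_2$: a lower bound on $\mu_2+\mu_3$ comes from the unknown-trial-function method of Laugesen--Siudeja, comparing $T(a,b)$ to explicit reference triangles (equilateral, right isosceles, two half-equilaterals), and an upper bound on $\mu_2$ comes from concrete variational test functions. The parameter space of acute triangles is split into three regions (nearly degenerate, nearly equilateral, and a small middle area), and in each the required inequality reduces to explicit polynomial inequalities in $(a,b)$, verified by hand or by a certified rectangle-subdivision algorithm. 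This buys rigor at the cost of substantial computation; your approach, if it could be completed, would be more conceptual, but as written the key scalene step is not there.
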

This result was already known for obtuse and right triangles \cite{AB04}, and for isosceles triangles \cite{Mi12}.
The proof is rather lengthy and we postpone it to the last section.

Next we reduce the hot spots problem to a simpler problem on the boundary of a triangle.
\begin{theorem}\label{2sides}
 If the second Neumann eigenfunction has no critical points on two sides of an acute triangle, than the hot spots conjecture holds.
\end{theorem}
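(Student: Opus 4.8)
The plan is to prove the sharper statement that $\varphi$ has no critical point in the open triangle; since any point of the open triangle at which $\varphi$ attains its global maximum or its global minimum is necessarily a critical point, this gives \autoref{conj} at once.

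Label the vertices $A,B,C$ so that $\varphi$ has no critical point on the open sides $AB$ and $AC$, write $v_1,v_2$ for the unit vectors from $A$ to $B$ and from $A$ to $C$, and $w$ for a unit vector along the third side $BC$. I would first record two facts. (a) Since $T$ is convex, $\varphi$ is $C^1$ up to $\partial T$, and because the Neumann condition holds on both sides meeting at a given vertex, $\nabla\varphi$ vanishes at all three vertices; thus the vertices are automatically critical points and only interior ones are at issue. (b) For any fixed unit direction $v$, the function $\partial_v\varphi$ again solves $\Delta(\partial_v\varphi)=-\mu_2\,\partial_v\varphi$ in $T$, and along a side with unit tangent $t$ and unit normal $n$ one has, using the Neumann condition and $\partial_n\partial_t\varphi=\partial_t\partial_n\varphi=0$ on that (straight) side,
\[
  \partial_v\varphi=(v\cdot t)\,\partial_t\varphi,\qquad \partial_n(\partial_v\varphi)=(v\cdot n)\,\partial_{nn}\varphi=-(v\cdot n)\bigl(\mu_2\varphi+\partial_{tt}\varphi\bigr)
\]
on that side. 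In particular $\partial_w\varphi$ satisfies the Neumann condition on $BC$ (where $w\cdot n=0$), and equals $\pm\partial_t\varphi$ there.

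Next I would extract sign information. On the open side $AB$ one has $\partial_{v_1}\varphi=\partial_t\varphi\neq0$, hence of one sign, and then $\partial_{v_2}\varphi=(v_2\cdot v_1)\partial_{v_1}\varphi=\cos\alpha\,\partial_{v_1}\varphi$ has the same sign, since the angle $\alpha$ at $A$ is acute; symmetrically on the open side $AC$. After possibly replacing $\varphi$ by $-\varphi$, assume $\partial_{v_1}\varphi>0$ on the open side $AB$. A short computation identifying $w\cdot v_1$ and $w\cdot v_2$ with the signed positions of the feet of the altitudes from $C$ and from $B$ shows that $w\cdot v_1$ and $w\cdot v_2$ have opposite signs precisely because the angles at $B$ and $C$ are acute, which pins down the sign of $\partial_w\varphi$ on each good side. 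The upshot: $\partial_{v_1}\varphi$, $\partial_{v_2}\varphi$ and $\partial_w\varphi$ are each nonvanishing, of known constant sign, on the open sides $AB$ and $AC$ (and vanish at the vertices), whereas on $BC$ only $\partial_w\varphi$ is controlled — there it solves the Neumann problem, but its sign is unknown, its zeros being exactly the (unexcluded) critical points of $\varphi$ on the third side.

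Now suppose $p\in T$ with $\nabla\varphi(p)=0$. As $v_1,v_2$ are not parallel, $p$ is a common zero of $\partial_{v_1}\varphi$ and $\partial_{v_2}\varphi$, and likewise of $\partial_w\varphi$. I would examine the nodal set of $\partial_w\varphi$, the derivative adapted to the bad side: by the sign information it meets $\partial T$ only at the vertices and, possibly, at points of the open side $BC$, so a nodal domain $\omega$ of $\partial_w\varphi$ abutting $p$ is a proper subdomain of $T$ whose boundary consists of nodal arcs (on which $\partial_w\varphi=0$, a Dirichlet condition) together with a piece of $BC$ (on which $\partial_w\varphi$ inherits the Neumann condition from (b)); on $\omega$ the one-signed function $\partial_w\varphi$ is then a first eigenfunction of the mixed Dirichlet–Neumann problem with eigenvalue $\mu_2$. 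The plan is to contradict this by monotonicity: such a mixed first eigenvalue on a proper subdomain $\omega\subsetneq T$, carrying a Dirichlet condition on the part of $\partial\omega$ interior to $T$, must exceed $\mu_2(T)$, using domain monotonicity, the bound $\mu_2(T)\le\lambda_1^{\mathrm D}(T)$ for convex domains, and the fact that turning a Neumann arc into a Dirichlet arc only raises eigenvalues. Several routine points must be dispatched — $\partial_w\varphi$ is not identically zero on any side (unique continuation from the Neumann data), and the nodal picture near the vertices, where $\nabla\varphi=0$, is degenerate — but the genuine obstacle is to verify that the relevant nodal domain $\omega$ truly picks up the favorable Dirichlet condition on its interior boundary and is a strictly smaller region, so that the strict inequality applies. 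Rerunning the argument with $\partial_{v_1}\varphi$ or $\partial_{v_2}\varphi$ in the residual configurations where every nodal domain of $\partial_w\varphi$ touches $BC$, aided by a Courant-type bound on the number of nodal domains, should close the remaining cases and force $\nabla\varphi\neq0$ throughout the open triangle.
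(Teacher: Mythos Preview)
Your outline captures the right machinery—directional derivatives of $\varphi$ are eigenfunctions, their nodal domains give mixed eigenvalue problems, and one contradicts $\mu_2$ via Friedlander-type comparisons—but it has a genuine gap in the case analysis.

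Write $\epsilon$ for the sign of $\partial_{v_2}\varphi$ on the open side $AC$ (you normalized $\partial_{v_1}\varphi>0$ on $AB$). Your own computation gives $w\cdot v_1$ and $w\cdot v_2$ of opposite signs, so $\partial_w\varphi$ has the \emph{same} sign on $AB$ and $AC$ only when $\epsilon=-1$. In that case your argument is essentially right: the nodal domain of $\{\partial_w\varphi<0\}$ can meet $\partial T$ only along $BC$, where $\partial_w\varphi$ is Neumann, and the reflection argument behind \autoref{line2} gives the contradiction (your stated justification via ``domain monotonicity and $\mu_2\le\lambda_1^D$'' is not quite enough; you need to reflect across $BC$ and invoke $\lambda_1\ge\mu_3$ on the kite, together with \autoref{symmetric}). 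But when $\epsilon=+1$, $\partial_w\varphi$ has \emph{opposite} signs on $AB$ and $AC$, so any nodal domain abutting the interior critical point will touch one of the good sides, where $\partial_w\varphi$ satisfies neither Dirichlet nor Neumann; your mixed-eigenvalue comparison then has no handle. Falling back on $\partial_{v_1}\varphi$ or $\partial_{v_2}\varphi$, as you propose, does not help: these have no clean boundary condition on $BC$ (your own formula gives $\partial_n(\partial_{v_i}\varphi)=-(v_i\cdot n)(\mu_2\varphi+\partial_{ww}\varphi)$ there, whose sign you do not control), and a ``Courant-type bound'' does not repair this.

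The missing ingredient is the derivative in the direction \emph{normal} to the bad side. If $n\perp BC$, then $\partial_n\varphi=0$ on $BC$ (Neumann for $\varphi$ becomes Dirichlet for $\partial_n\varphi$), and since $n\cdot v_1,\,n\cdot v_2>0$, in the $\epsilon=+1$ case $\partial_n\varphi$ has the same sign on both good sides. Then $\partial_n\varphi\ge0$ on all of $\partial T$, and an interior zero would force a closed nodal loop, contradicting \autoref{loop}. This is exactly what the paper does: placing the bad side vertically, it uses $u_x$ (normal to the bad side, Dirichlet there) in one case and $u_y$ (tangential, Neumann there) in the other; the two derivatives are complementary and together cover both sign configurations. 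Your proposal uses only the tangential one.
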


As a consequence we get a partial proof of the conjecture.
\begin{theorem}\label{pi6}
  The hot spots \autoref{conj} holds for acute triangles with an angle smaller then or equal to $\pi/6$.
\end{theorem}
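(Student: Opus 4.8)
The plan is to verify the hypothesis of \autoref{2sides} for the two sides of the triangle meeting at the small angle, and then invoke that theorem. Assume the acute triangle $T$ has its smallest angle $\gamma\le\pi/6$ at a vertex $C$, with the other vertices $A,B$ (angles $\alpha,\beta\in(\pi/3,\pi/2)$). Place $C$ at the origin with the internal bisector of $\gamma$ along the positive $x$-axis, so $CA$ lies on the ray $\theta=\gamma/2$ and $CB$ on $\theta=-\gamma/2$; acuteness of $\alpha,\beta$ forces the outward normal along $AB$ to have positive $x$-component. Let $\varphi$ be a second Neumann eigenfunction, normalised so that $\varphi(C)>0$; it is unique up to scaling by \autoref{simple} and has exactly two nodal domains by Courant. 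By \autoref{2sides} it suffices to prove that $\varphi$ has no critical point in the relative interior of $CA$ or of $CB$. On $CA$ the Neumann condition $\partial_n\varphi=0$ gives $\nabla\varphi=(\partial_t\varphi)\,t$ with $t=(\cos\tfrac\gamma2,\sin\tfrac\gamma2)$, and a one-line computation yields
\[
  \partial_t\varphi=\sec\tfrac\gamma2\;\partial_x\varphi\qquad\text{on }CA ,
\]
and symmetrically on $CB$. Since $\cos\tfrac\gamma2\ne0$, the problem reduces to showing that $\partial_x\varphi$ has no zero on the open sides $CA$ and $CB$ — equivalently, that $\varphi$ is strictly monotone along each of these two sides.

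\textit{Main step.} The guiding picture is that a triangle with small $\gamma$ is a thin sliver with apex $C$, on which $\mu_2$ is close to $(j_{1,1}/L)^2$, where $L:=\max(|CA|,|CB|)=\operatorname{diam} T$, and $\varphi$ is close to the radial profile $J_0(\sqrt{\mu_2}\,r)$ of the surrounding infinite sector, which is strictly decreasing in $r$ across all of $T$; the threshold $\pi/6$ is precisely what is needed to make this quantitative. I would first prove the spectral bound $\mu_2(T)\le c_0/L^2$ with an explicit constant $c_0$ (not much larger than $j_{1,1}^2$), valid for every acute triangle with an angle $\le\pi/6$, by inserting a truncated radial test function $J_0(\sqrt\lambda\,r)$ (with $\lambda$ fixed by the zero-mean constraint on $T$) into the Rayleigh quotient and estimating the polar integrals using the small-angle geometry. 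Then I would transfer this monotonicity to $\varphi$ itself. One route is a comparison argument for $v:=\partial_x\varphi$: it solves $\Delta v+\mu_2 v=0$, and differentiating the boundary conditions produces a linear boundary relation between $v$ and $\partial_t v$ on each side — on $CA$ and $CB$ no term in $\varphi$ survives (those sides pass through the origin), and on $AB$ the surviving $\varphi$-term has a sign fixed by the sign of $\varphi$ off its nodal line — so that, once $\mu_2$ lies below the relevant mixed eigenvalue (which is what the bound above secures), the maximum principle forces $v$ to keep a constant sign. An alternative route is a dichotomy argument: a critical point $p_0$ of $\varphi$ in the interior of $CA$ would make $\varphi-\varphi(p_0)$ behave like $c\,r^k\cos(k\theta)$ near $p_0$ with integer $k\ge2$ (the half-plane model with Neumann data), so at least two level curves at the height $\varphi(p_0)$ leave $p_0$ into $T$; tracing these to $\partial T$ while using that $\varphi$ has only two nodal domains, that its nodal arc lies where the spectral bound forces it, and that $\mu_2$ is simple, would yield a contradiction.

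\textit{Conclusion and main obstacle.} Once $\partial_x\varphi\ne0$ on the open sides $CA$ and $CB$, the identity $\partial_t\varphi=\sec\tfrac\gamma2\,\partial_x\varphi$ shows $\varphi$ has no critical point there, and \autoref{2sides} delivers \autoref{conj} for $T$. The reduction is routine; the real work is in the main step, namely ruling out zeros of $\partial_x\varphi$ on the two sides at the small angle. The two delicate points are: obtaining the eigenvalue bound with exactly the constant that the value $\pi/6$ (rather than merely ``$\gamma$ small enough'') provides; and controlling the \emph{asymmetry} of $T$, since $AB$ need not be perpendicular to the bisector of $\gamma$ — this is precisely what blocks a direct appeal to Miyamoto's isosceles result and forces the more robust monotonicity/contradiction argument above.
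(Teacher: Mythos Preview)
Your reduction to \autoref{2sides} is correct, and you are targeting the right pair of sides (the two longer ones, adjacent to the small angle). But the ``main step'' is not a proof: both routes are sketches with substantive gaps. In route~(a), the boundary condition that $v=\varphi_x$ inherits on $AB$ genuinely involves $\varphi$ itself (as you note), and its sign on $AB$ is \emph{not} known a priori --- in fact the nodal line of $\varphi$ may well terminate on $AB$, so $\varphi$ changes sign there; you cannot assume otherwise without circularity. Moreover the phrase ``relevant mixed eigenvalue'' is never identified, and there is no maximum principle for $\Delta v+\mu_2 v=0$ absent a spectral gap that you have not established. In route~(b), the assertion that ``its nodal arc lies where the spectral bound forces it'' is exactly the missing ingredient: a bound of the form $\mu_2\le c_0/L^2$ (which you also do not prove) constrains neither the location of the nodal line nor the global topology of level curves enough to close the argument. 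You yourself flag these as ``delicate points''; as written they are open problems, not steps.

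The paper's proof takes a completely different route for the main step, and the key idea you are missing is the \emph{kite reflection}. One places the longest side on the $x$-axis and reflects $T$ across it to form a kite $K$; then $\varphi$ is the lowest symmetric mode of $K$. \autoref{critical} eliminates critical points on that side by comparing $\varphi$ with the explicit solution $\psi(x,y)=\cos(\sqrt{\mu_2}\,y)$: a putative critical point $p$ produces a degenerate zero of $u=\varphi(p)\psi-\varphi$, hence at least two positive nodal domains of $u$ in $K$, and a symmetric test-function argument (\autoref{Hcond}) yields a contradiction --- \emph{provided} the second eigenfunction of the kite is itself symmetric, which is the content of \autoref{kitesym}. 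The eigenvalue bound needed is $\mu_2\le\pi^2/b^2$ (\autoref{mubound}), not a sector estimate, and it comes from an explicit trial function. One then repeats the whole argument with the \emph{middle} side on the $x$-axis to handle the second side. The condition $\gamma\le\pi/6$ enters only at the end, as the region where the kite-symmetry hypothesis $3b^2\le 1-a+a^2$ holds for both placements; see \autoref{fig1}.
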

In fact in \autoref{sechotspots} we prove the conjecture for these, and a few more triangles. This result relies on two key lemmas: symmetry of the second Neumann eigenfunction of kites, and a critical point elimination lemma. 

Let $T$ be a triangle with vertices $(0,0)$, $(1,0)$ and $(a,b)$ with $0\le a\le 1/2$ and $b>0$. Let $K$ be the kite obtained by mirroring the triangle along the $x$-axis (the fourth vertex is then $(a,-b)$). We have 
\begin{lemma}\label{kitesym}
  If $3b^2\le 1-a+a^2$ then the second Neumann eigenfunction of $K$ is simple and symmetric with respect to the $x$-axis, except for the square ($a=b=1/2$) and the equilateral triangle (degenerate kite with $a=0$ and $b=1/\sqrt{3}$). In these cases $K$ has a double eigenvalue ($\mu_2=\mu_3$).
\end{lemma}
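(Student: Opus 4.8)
The plan is to treat the kite $K$ as a domain with a reflection symmetry across the $x$-axis and decompose the Neumann spectrum into a symmetric part and an antisymmetric part. Eigenfunctions that are symmetric with respect to the $x$-axis correspond exactly to Neumann eigenfunctions of the half-kite $T$ (the original triangle), while antisymmetric eigenfunctions correspond to mixed Dirichlet--Neumann eigenfunctions of $T$: Dirichlet on the axis of symmetry (the longest diagonal of the kite, i.e.\ the segment from $(0,0)$ to $(1,0)$) and Neumann on the two remaining sides. Call the bottom of the antisymmetric spectrum $\mu^{DN}$ and the bottom of the nonconstant symmetric spectrum $\mu^{N}$ (which is the second Neumann eigenvalue of the triangle $T$). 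The lemma amounts to the claim that under the hypothesis $3b^2\le 1-a+a^2$ we have $\mu^{N}<\mu^{DN}$, with equality precisely in the square and equilateral cases; simplicity of $\mu_2(K)$ then follows because the symmetric and antisymmetric blocks each have simple ground states, and they do not coincide outside the two exceptional configurations.

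First I would set up the variational characterizations: $\mu^{N}=\inf\{ \int_T|\nabla u|^2 / \int_T u^2 : \int_T u = 0\}$ and $\mu^{DN}=\inf\{ \int_T|\nabla u|^2 / \int_T u^2 : u=0 \text{ on the base}\}$, and observe that a convenient explicit test function for the antisymmetric problem is the linear function $u(x,y)=y$ (it vanishes on the base and satisfies the Neumann condition nowhere in particular, but it is an admissible $H^1$ competitor), giving the clean bound $\mu^{DN}\le \int_T 1 / \int_T y^2 = b\cdot|T| / \int_T y^2$. A direct computation of $\int_T y^2$ over the triangle with vertices $(0,0),(1,0),(a,b)$ yields $\int_T y^2 = b^3/12\cdot(\text{base length factor})$, and $|T| = b/2$, so this produces an upper bound for $\mu^{DN}$ of the form $c/b^2$ for an explicit constant. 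For the lower bound on $\mu^{DN}$ — or rather, to force $\mu^{N}$ below it — I would instead bound $\mu^{N}$ from above using an explicit mean-zero test function on $T$, the natural candidate being an affine function $u=\alpha x+\beta$ chosen to have zero mean, or a suitable coordinate function adapted to the geometry; comparing the two Rayleigh quotients reduces the inequality $\mu^N<\mu^{DN}$ to the stated algebraic condition $3b^2\le 1-a+a^2$ after the dust settles. The borderline analysis — extracting that equality in the test-function bounds happens exactly when the affine competitor is genuinely an eigenfunction, which pins down $a=b=1/2$ or $a=0,b=1/\sqrt3$ — is where one invokes the equality case of the Rayleigh quotient (the minimizer must be the test function, hence linear, hence the domain must be special).

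The main obstacle I anticipate is that crude linear test functions will generally \emph{not} separate $\mu^N$ and $\mu^{DN}$ over the full range $3b^2\le 1-a+a^2$; the inequality is tight and the affine bound on $\mu^N$ is not tight except in the degenerate cases, so one likely needs a sharper argument. The robust route is a \emph{domain-monotonicity / comparison} argument: one compares $T$ (or $K$) to a reference domain where both eigenvalues are known — the equilateral triangle or the right isosceles case — and tracks how $\mu^N$ and $\mu^{DN}$ move as $(a,b)$ varies, using that the Dirichlet-type eigenvalue $\mu^{DN}$ is more sensitive to ``thinning'' the triangle in the $y$-direction (large $\mu^{DN}$ when $b$ is small) than the Neumann eigenvalue $\mu^N$. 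Equivalently, one can use a known bound such as $\mu^{DN}\ge \pi^2/(\text{diam in the }y\text{-direction})^2$-type estimates against an explicit upper bound for $\mu^N$ coming from the variational principle; the content of the hypothesis $3b^2\le 1-a+a^2$ is exactly that the triangle is ``short enough'' in the $y$-direction (equivalently, the apex angle is not too large) for this separation to hold. Closing the gap in the genuinely tight regime near the square and the equilateral triangle — where $\mu_2=\mu_3$ and the ordering of the two families actually crosses — is the delicate part, and there I would expect to need either the explicit eigenfunctions available by symmetry in those two cases together with a perturbation argument, or a continuity/degree argument showing the two eigenvalue branches cross exactly once along the relevant boundary curve.
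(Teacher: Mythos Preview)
Your decomposition into symmetric and antisymmetric parts is exactly right, and reducing the lemma to the strict inequality $\mu^{N}<\mu^{DN}$ (with $\mu^{N}=\mu_2(T)$ and $\mu^{DN}=\mu_a$ the lowest mixed eigenvalue on $T$ with Dirichlet on the base) is precisely what the paper does. The gap is in the direction of your bounds. The test function $u(x,y)=y$ gives an \emph{upper} bound for $\mu^{DN}$, which is useless for proving $\mu^{N}<\mu^{DN}$; you need a \emph{lower} bound for $\mu^{DN}$ and an upper bound for $\mu^{N}$, and you never propose a concrete mechanism for the former. The sentence ``comparing the two Rayleigh quotients reduces the inequality $\mu^N<\mu^{DN}$ to the stated algebraic condition'' cannot work as written: two upper bounds cannot be compared to yield an ordering of the underlying quantities. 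Your fallback suggestion of a Faber--Krahn-type bound $\mu^{DN}\gtrsim \pi^2/b^2$ is in the right spirit, but no such clean inequality holds for the mixed problem on a general triangle with Dirichlet only on one side; domain monotonicity does not straightforwardly apply because the Neumann sides are not nested.

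The paper handles the lower bound for $\mu^{DN}$ by a genuinely nontrivial device: the ``unknown trial function'' lemma of Laugesen--Siudeja, which bounds $\lambda(a,b)$ below by $C_{a,b,c,d}\,\lambda(c,d)$ via a linear transformation to a reference triangle, \emph{provided} a certain quadratic inequality in two unknown parameters $\gamma,\delta$ (coming from the unknown eigenfunction) holds. One then chooses two reference triangles with explicit eigenvalues (the half-equilateral triangle and a quarter-rhombus, the latter bounded via Hooker--Protter) and combines the resulting inequalities to eliminate $\gamma$ and $\delta$, producing an explicit lower bound $\mu_a\ge \pi^2(3+7\delta+6a\sqrt{3-4a})/(12b^2)$ with $\delta=a^2+b^2-a$. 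On the other side, the upper bound for $\mu_2(T)$ uses not an affine function but a carefully weighted combination of three transplanted exact eigenfunctions; affine test functions alone are too crude near the equilateral and square cases where the inequality becomes an equality. The comparison of the two bounds then reduces to a one-variable polynomial inequality on $[0,1/2]$, which is where the algebraic condition $3b^2\le 1-a+a^2$ actually enters. Your perturbation/continuity idea for the borderline cases is not what closes the argument; rather, the explicit bounds are sharp enough that equality is visible as a double root of the final polynomial at $a=1/2$ (and $a=0$).
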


The above lemma is used in conjunction with the following lemma to prove that there are no critical points on two sides of the triangle $T$.
\begin{lemma}[Generalization of Miyamoto {\cite[Lemma 3.2]{Mi12}}]\label{critical}
  Suppose that $\mu_2(T)\le \frac{\pi^2}{b^2}$ and $\varphi$ is the eigenfunction for $\mu_2$. Then
  \begin{itemize}
    \item $\varphi$ has at most one critical point on the side on the $x$-axis. This critical point (if it exists) is a positive minimum or a negative maximum. 
    \item If the second Neumann eigenfunction of $K$ is simple and symmetric with respect to the $x$-axis, then $\varphi$ has no critical point on the side along the $x$-axis and it is changing sign along this side. 
  \end{itemize}
\end{lemma}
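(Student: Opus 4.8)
The plan is to study the restriction $f(x):=\varphi(x,0)$ to the side on the $x$-axis. The Neumann condition there means $\partial_y\varphi\equiv 0$ on $\{0<x<1\}$, so on this side the gradient of $\varphi$ equals $(f'(x),0)$ and a critical point of $\varphi$ on the side is exactly a zero of $f'$ in $(0,1)$. Since every angle of an acute triangle is $<\pi$, $\nabla\varphi$ also vanishes at the two base vertices, so $f'(0)=f'(1)=0$. A $C^1$ function is strictly monotone between consecutive critical points, so the critical points of $f$ on $[0,1]$ alternate between strict local maxima and strict local minima; if $f$ had two interior critical points $x_1<x_2$ they would form a max–min pair in one order or the other, with $f$ monotone on $[x_1,x_2]$, and either order contradicts the sign–value relations once one knows that \emph{each interior critical point $x_0$ of $f$ satisfies $f(x_0)f''(x_0)>0$} (e.g. a negative maximum at $x_1$ together with a positive minimum at $x_2$ would force $f(x_1)\ge f(x_2)$, a negative number at least a positive one). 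Thus the whole first bullet reduces to: if $f'(x_0)=0$ with $x_0\in(0,1)$, then $f(x_0)f''(x_0)>0$, and the sign of $f(x_0)$ then decides ``minimum'' versus ``maximum''.

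To prove this, use the eigenvalue equation together with $\partial_{xy}\varphi\equiv 0$ on the side (differentiate $\partial_y\varphi\equiv 0$ in $x$): at $(x_0,0)$ the Hessian of $\varphi$ is $\operatorname{diag}\!\big(f''(x_0),\varphi_{yy}(x_0,0)\big)$ with $f''(x_0)+\varphi_{yy}(x_0,0)=-\mu_2 f(x_0)$. Assume $f(x_0)>0$ (the case $f(x_0)<0$ follows on replacing $\varphi$ by $-\varphi$); we must show $f''(x_0)>0$, equivalently $\varphi_{yy}(x_0,0)<-\mu_2 f(x_0)<0$. Suppose not. Consider the auxiliary function $u:=\partial_x\varphi$: it solves $\Delta u+\mu_2 u=0$ in $T$, satisfies $\partial_y u\equiv 0$ on the bottom side, and vanishes at $(x_0,0)$; since also $\partial_x(\partial_y u)\equiv 0$ there, the nodal set of $u$ is tangent to the bottom side at $(x_0,0)$ and issues a single arc $\gamma$ into $T$. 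The aim is to show that $\gamma$, together with arcs of the nodal set of $\varphi$ and of $\partial T$, cuts off a subdomain $\Omega\subset T$ carrying a \emph{one-signed} solution of $\Delta v+\mu_2 v=0$ with $v\equiv 0$ on all of $\partial\Omega$. Such a $v$ is a first Dirichlet eigenfunction of $\Omega$, so $\mu_2(T)=\lambda_1^{\mathrm{Dir}}(\Omega)$; but $\Omega\subset\{0<y<b\}$, and the Dirichlet Laplacian on this strip has spectral bottom $\pi^2/b^2$ that is \emph{not} an eigenvalue, so strict domain monotonicity gives $\lambda_1^{\mathrm{Dir}}(\Omega)>\pi^2/b^2$, contradicting $\mu_2(T)\le\pi^2/b^2$. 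A clean, rigorous instance of this mechanism: any nodal domain of $w:=\partial_y\varphi$ whose closure meets $\partial T$ only along the bottom side would, since $w\equiv0$ there, be a Dirichlet nodal domain inside the strip, which is impossible; hence such nodal domains must reach a slanted side, and the argument is organized around this.

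For the second bullet, note first that the hypothesis forces $\mu_2(K)=\mu_2(T)$: one always has $\mu_2(K)\le\mu_2(T)$ via the even reflection $\Phi(x,y):=\varphi(x,|y|)$, which is a non-constant Neumann eigenfunction of $K$ for $\mu_2(T)$ (the matching $\partial_y\varphi\equiv0$ on the bottom side makes $\Phi$ a weak solution across the gluing line), and if $\mu_2(K)<\mu_2(T)$ its eigenfunction would be antisymmetric — i.e. a mixed eigenfunction of $T$ (Neumann on the slanted sides, Dirichlet on the bottom) — contradicting ``symmetric''. Hence $\Phi$ is a second Neumann eigenfunction of $K$, so by simplicity it is, up to scaling, \emph{the} second eigenfunction, which is symmetric about the $x$-axis. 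By Courant's theorem $\Phi$ has exactly two nodal domains; each is connected and invariant under reflection in the $x$-axis, hence must meet the axis (a connected planar set symmetric about a line cannot avoid it). Therefore $\varphi$ takes both signs on the bottom side. Combined with the first bullet, a single critical point of $f$ — say a positive minimum at $x_0$ — would force $f'<0$ on $(0,x_0)$ and $f'>0$ on $(x_0,1)$, hence $f>0$ on all of $[0,1]$, contradicting the sign change (and symmetrically for a negative maximum); so $\varphi$ has no critical point on the bottom side and changes sign along it.

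The main obstacle is the extraction of the Dirichlet subdomain $\Omega$ in the second paragraph. One must follow the nodal arc $\gamma$ of $\partial_x\varphi$ out of $(x_0,0)$ — it cannot terminate in the interior of $T$ — and rule out that it returns to the bottom side, or runs into a slanted side, in a way that leaves a Neumann portion of $\partial\Omega$ of positive length; this would break the comparison, since a mixed (Neumann–Dirichlet) problem on a subdomain of the strip $\{0<y<b\}$ need not have first eigenvalue above $\pi^2/b^2$. Carrying this out requires using the detailed geometry — the two-nodal-domain structure of $\varphi$, the normalization $0\le a\le 1/2$, and the Neumann conditions on the two slanted sides — generalizing the corresponding step in Miyamoto's treatment of the isosceles case.
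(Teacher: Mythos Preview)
Your argument for the second bullet is essentially correct and matches the paper's reasoning: once the first bullet is in hand, the sign change of $\varphi$ along the base (via the two symmetric nodal domains of the kite eigenfunction) rules out a single interior positive minimum or negative maximum.

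The first bullet, however, has a genuine gap that you yourself flag. You reduce to showing $f(x_0)f''(x_0)>0$ at any interior critical point and propose to derive a contradiction from $f''(x_0)\le 0$ by following a nodal arc of $\partial_x\varphi$ (or $\partial_y\varphi$) to cut off a subdomain $\Omega$ on which some $v$ is a first Dirichlet eigenfunction, then comparing with the strip $\{0<y<b\}$. But you never specify which $v$ vanishes on \emph{all} of $\partial\Omega$: the arc $\gamma\subset\{\partial_x\varphi=0\}$ is not in the zero set of $\varphi$ or $\partial_y\varphi$, and neither $\partial_x\varphi$ nor $\partial_y\varphi$ vanishes on the slanted sides. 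So there is no candidate $v$ with pure Dirichlet data on $\partial\Omega$, and as you note, a mixed boundary piece destroys the strip comparison. You also do not explain how the sign assumption $f''(x_0)\le 0$ actually forces the nodal geometry you need; the link between this second-order information at a single boundary point and the global behaviour of $\{\partial_x\varphi=0\}$ or $\{\partial_y\varphi=0\}$ is never made.

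The paper avoids this entirely by a different mechanism. It introduces the explicit comparison solution $\psi(x,y)=\cos(\sqrt{\mu}\,y)$, which satisfies $\Delta\psi=-\mu\psi$ and, thanks to the hypothesis $\mu b^2\le\pi^2$, has $\partial_\nu\psi<0$ on the whole boundary of the kite. Setting $u=\varphi(p)\psi-\varphi$ produces a \emph{degenerate} zero at the critical point $p$, hence at least four nodal branches and two disjoint components of $\{u>0\}$. One then builds a test function $v=u\,1_{F_1}-c\,u\,1_{G}$ with $\int_K v=0$ and checks $\mathcal H_\mu[v]=\int_{\partial(F_1\cup G)} v\,\partial_\nu v\le 0$ (because on $\partial K$ one has $u>0$ and $\partial_\nu u=\varphi(p)\partial_\nu\psi<0$), contradicting the variational characterization of $\mu$. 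This argument uses the hypothesis $\mu_2\le\pi^2/b^2$ only to secure the sign of $\partial_\nu\psi$, and never requires tracking nodal arcs of derivatives or isolating a pure Dirichlet subdomain. If you want to rescue your approach, you would need a substitute for this global sign control; as written, the proposal does not supply one.
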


However to apply this lemma we need a bound for $\mu_2(T)$. 
\begin{lemma}\label{mubound}
  If $b^2\le a^2+(1-a)^2$ then
  \begin{align}\label{muineq}
    \mu_2(T)\le \frac{\pi^2}{b^2}.
  \end{align}
  In particular, the condition \eqref{muineq} holds for any triangle with the longest or middle side on the $x$-axis and the angle up to $\pi/4$ at vertex $(1,0)$. 
\end{lemma}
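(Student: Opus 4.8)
The plan is to use the Rayleigh--Ritz principle. Since $\mu_1(T)=0$ is simple with constant eigenfunction,
\[
  \mu_2(T)=\min\Bigl\{\tfrac{\int_T|\nabla u|^2}{\int_T u^2}\ :\ u\in H^1(T),\ \textstyle\int_T u=0\Bigr\},
\]
so it suffices to exhibit one test function $u$ of mean zero on $T$ with $\int_T|\nabla u|^2\le\frac{\pi^2}{b^2}\int_T u^2$. Note that $\pi^2/b^2$ is the first nonzero Neumann eigenvalue of the segment $[0,b]$ (the height of $T$), which suggests building $u$ from $\cos(\pi y/b)$; the obstruction is that $\cos(\pi y/b)$ is \emph{not} mean zero on $T$ — it is mean zero only on a full-height rectangle, and $T$ contains none — and a purely linear test function only gives the (scale-correct but too weak) bound $\mu_2(T)\le 36/(1+a^2+(1-a)^2)$. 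So a genuinely two-dimensional construction is needed.

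My first idea would be to exploit the affine geometry. Observe that $T$ together with its $180^\circ$ rotation about the midpoint $(\tfrac12,0)$ of the base is a parallelogram $P$ of area $b$ whose two edge vectors are two sides of $T$; since $P$ is the image of a rectangle $R$ under an affine map $\Psi$ carrying centre to centre, every rotation-invariant mean-zero Neumann mode of $R$ pushes forward to a rotation-invariant function on $P$ whose restriction to $T$ is automatically mean zero with the same Rayleigh quotient. This yields $\mu_2(T)\le\mu^{\mathrm{sym}}_1(R)/\sigma_{\min}(G)^2$, where $G$ is the linear part of $\Psi$ and $\mu^{\mathrm{sym}}_1$ is the first nonzero eigenvalue of $R$ among centrally invariant modes. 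The two quantities that drive the estimate are $\det(G^{\mathsf T}G)=b^2/(\operatorname{area}R)^2$ and $\operatorname{tr}(G^{\mathsf T}G)$, which is governed by $a^2+(1-a)^2+2b^2$; after optimizing over the aspect ratio of $R$ and over the three admissible parallelograms one is left with an elementary condition on $(a,b)$, and the calculation is arranged so that this condition is exactly $b^2\le a^2+(1-a)^2$. An equivalent and perhaps more transparent route is to cut $T$ along a segment $\gamma$ from the apex to a well-chosen point of the base into two triangles $P_1,P_2$, use the standard inequality $\mu_2(T)\le\max_i\nu(P_i)$ where $\nu(P_i)$ is the first eigenvalue of $P_i$ with Dirichlet data on $\gamma$ and Neumann elsewhere, and invoke the sharp bound $\nu(P)\le j_{0,1}^2/h^2$ for a triangle with altitude $h$ onto its Dirichlet side (test function $J_0\bigl(j_{0,1}(1-\operatorname{dist}(\cdot,\gamma)/h)\bigr)$, using that horizontal cross-sections decrease linearly); balancing the two altitudes in terms of the position of the cut is where the hypothesis enters.

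I expect the main difficulty to be the ``moderately fat'' part of the parameter range, i.e.\ triangles close to equilateral, where the bound is nearly sharp: comparison with a single fixed model (the square, the right-isosceles triangle, or a single median cut) succeeds only on a strict sub-region such as $b^2\le a(1-a)$, so one really must use the one-parameter family of comparison rectangles — or the free position of the cut — together with the sharp eigenvalue constants; this is the calculational heart of the proof. Granting the inequality, the ``in particular'' assertion is immediate: writing $A,B$ for the base angles one has $a=b\cot A$, $1-a=b\cot B$, hence $a^2+(1-a)^2=b^2(\cot^2 A+\cot^2 B)$; an angle at $(1,0)$ of at most $\pi/4$ gives $\cot^2 B\ge1$, so $b^2\le a^2+(1-a)^2$, while the requirement that the $x$-axis side be longest or middle is just the normalization $0\le a\le\tfrac12$ (equivalently $\angle A\ge\angle B$).
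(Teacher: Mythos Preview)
Your framework is right --- this is a Rayleigh--Ritz bound, and the ``in particular'' clause follows exactly as you say from $1-a=b\cot B$. But neither of the two constructions you sketch actually delivers the bound $\pi^2/b^2$ on the region $b^2\le a^2+(1-a)^2$; the sentence ``the calculation is arranged so that this condition is exactly $b^2\le a^2+(1-a)^2$'' is an assertion, not an argument, and it turns out to be false for both routes.

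\emph{Parallelogram route.} Take $P$ with edge vectors $(a,b)$ and $(1-a,-b)$ (your rotation about the base midpoint) as the affine image under $G$ of the unit square, and push forward the $(1,1)$ mode $u=\cos(\pi x)\cos(\pi y)$. Because $\int\partial_xu\,\partial_yu=0$, only the diagonal of $(G^{\mathsf T}G)^{-1}$ enters, and a two-line computation gives Rayleigh quotient
\[
  \frac{\pi^2\bigl(a^2+(1-a)^2+2b^2\bigr)}{b^2},
\]
\emph{independent of the aspect ratio of $R$}. This is always strictly larger than $\pi^2/b^2$. The other two parallelograms yield $\pi^2\bigl((1-a)^2+b^2+1\bigr)/b^2$ and $\pi^2\bigl(a^2+b^2+1\bigr)/b^2$, and the $(2,0)$, $(0,2)$ modes are worse still. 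So there is no aspect-ratio optimization to perform, and no choice among the three parallelograms helps.

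\emph{Cevian route.} With the cut from the apex to $(t,0)$, the altitudes from the base vertices onto the cut are $h_1=tb/\ell$, $h_2=(1-t)b/\ell$ with $\ell^2=(a-t)^2+b^2$, so your Bessel bound gives
\[
  \mu_2(T)\le \frac{j_{0,1}^2\,\ell^2}{b^2\min\bigl(t^2,(1-t)^2\bigr)}.
\]
This is minimized at $t=\tfrac12$ with value $4j_{0,1}^2\bigl((a-\tfrac12)^2+b^2\bigr)/b^2$, which is $\le\pi^2/b^2$ only when $(a-\tfrac12)^2+b^2\le\pi^2/(4j_{0,1}^2)\approx0.427$. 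At $a=0$ this forces $b^2\lesssim0.18$, nowhere near the target $b^2\le1$.

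The paper's argument is different and concrete: it takes an explicit linear combination of two affinely transplanted eigenfunctions --- $\varphi_1=\cos(\pi y)-\cos(\pi x)$ from the right-isosceles triangle $T(0,1)$ and $\varphi_2=\cos(\pi x)\cos(\pi y)$ from $T(\tfrac12,\tfrac12)$ --- namely
\[
  f(x,y)=\bigl(\tfrac12-a\bigr)\,\varphi_1\!\Bigl(x-\tfrac{a}{b}y,\ \tfrac{y}{b}\Bigr)\;-\;a\,\varphi_2\!\Bigl(x+\tfrac{1-2a}{2b}y,\ \tfrac{y}{2b}\Bigr).
\]
Each transplant has mean zero on $T(a,b)$, and the Rayleigh quotient of $f$ is computed in closed form; after the substitutions $c=a(a-1)$ and $d=b^2+c$ the inequality $R[f]\le\pi^2/b^2$ reduces to $(3c+1)c(2\pi^2-16)\le0$, which is immediate from $-\tfrac14\le c\le0$. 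The key point is the choice of the weights $(\tfrac12-a)$ and $-a$; this interpolation between two exact models is what makes the algebra close up precisely at $b^2=a^2+(1-a)^2$, and it is not captured by either of your schemes.
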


For the discussion on how all these results can be applied to the hot spots problem, see \autoref{sechotspots}, in which we prove \autoref{pi6}. 

  \subsection{History of the problem}\label{sechist}
  The hot spots conjecture was posed by J. Rauch in 1975 \cite{R75} for arbitrary open sets. The first positive result was obtained by Kawohl \cite{K85} for products of an arbitrary domain and an interval. In the same manuscript he also restates the conjecture just for convex sets. Subsequent counterexamples by Burdzy and Werner \cite{BW99} (two holes) and Burdzy \cite{B05} (one hole) show that the restriction to convex domains might be necessary. 
 
  The hot spots conjecture for convex domains remains open, however many special cases have been solved. Ba\~nuelos and Burdzy were able to handle domains with a line of symmetry and a few more (quite restrictive) technical assumptions \cite{BB99}. A year later Jerison and Nadirashvili \cite{JN00} proved that the conjecture holds for domains with two lines of symmetry. In a different direction, Burdzy and Atar \cite{AB04} had to assume that the domain is bounded by graphs of two Lipschitz functions with Lipschitz constant 1.

  All known results assume some degree of symmetry or special shape of the boundary. Surprisingly, domains as simple as acute triangles are not covered by any known result (note that obtuse and right triangles were solved \cite{BB99,AB04}). The conjecture for isosceles triangles can be obtained by combining \cite{AB04,P02} and \cite{LSminN}, or directly using the new method due to Miyamoto \cite{Mi12}. Refinement of this new method leads to the results of this paper. There is also an active Polymath7 project \cite{poly} proposed by Chris Evans and moderated by Terrence Tao. The current focus of the project is on developing robust validated numerical methods that would lead to the proof of the hot spots conjecture for acute triangles and possibly other domains.

\section{Preliminary results}
\subsection{Symmetric modes}
Note that on a domain with a line of symmetry, the second Neumann eigenfunction is either symmetric or antisymmetric. In case the eigenvalue is not simple, we can decompose any eigenfunction into symmetric and antisymmetric parts. 

Any symmetric mode on a symmetric domain satisfies Neumann condition on the line of symmetry. Hence it is also a mode for the half of the domain. Therefore the lowest symmetric mode on the symmetric domain must be the same as the eigenfunction for $\mu_2$ for each half. Note however, that this symmetric mode does not need to belong to $\mu_2$ on the whole domain. We need the following stronger results for symmetric modes.

\begin{figure}[t]
  \begin{center}
\begin{tikzpicture}[scale=1]
  \draw[clip] (0,0) .. controls (-1,2) and (2,3) .. (3,0) .. controls  (2,-3) and (-1,-2).. (0,0); 
  \draw[dashed] (0,0) -- (3,0);
  \draw[dashed] (0.5,2) .. controls (2,0) .. (0.5,-2);
  \draw (0.5,1.2) node {$D^+$};
  \draw (0.5,-1.2) node {$D^-$};
\end{tikzpicture}
  \end{center}
  \caption{The second antisymmetric mode. Dashed curves denote nodal lines.}
  \label{figanti}
\end{figure}
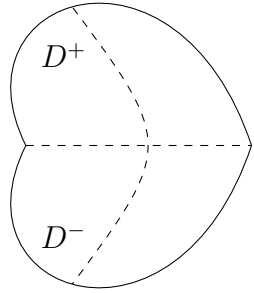

\begin{lemma}\label{symmetric}
  Suppose $D$ is a domain with a line of symmetry. Then there cannot be two orthogonal antisymmetric eigenfunctions in the span of the eigenspaces of $\mu_2$ and $\mu_3$ (note that $\mu_2$ might equal $\mu_3$).

  This means that either $\mu_2$ or $\mu_3$ must have a symmetric eigenfunction. It is also possible that all eigenfunctions for these eigenvalues are symmetric, as is the case for narrow subequilateral triangles, or narrow sectors.
\end{lemma}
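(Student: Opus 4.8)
The plan is to transfer the problem to the half of $D$ cut off by the line of symmetry and then count nodal domains. Place the line of symmetry on the $x$-axis, write $\sigma(x,y)=(x,-y)$ for the reflection, $D^+=D\cap\{y>0\}$ and $\ell=D\cap\{y=0\}$; throughout I use that $D^+$ is connected (as it is in all cases of interest, e.g.\ when $D$ is convex). The basic dictionary is standard: an antisymmetric Neumann eigenfunction of $D$ vanishes on $\ell$, so its restriction to $D^+$ is, with the same eigenvalue, an eigenfunction of the \emph{mixed} problem on $D^+$ (Dirichlet on $\ell$, Neumann on $\partial D^+\cap\partial D$); conversely the odd extension of a mixed eigenfunction is a genuine Neumann eigenfunction of $D$, precisely because vanishing on $\ell$ is what puts the odd extension into $H^1(D)$. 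Since restriction scales the $L^2$ norm by a fixed constant, this correspondence is an isomorphism preserving orthogonality and multiplicities, so the antisymmetric part of the Neumann spectrum of $D$ coincides with the spectrum $0<\nu_1<\nu_2\le\nu_3\le\cdots$ of the mixed problem on $D^+$. I will use two facts about the bottom of the latter: $\nu_1$ is simple with an eigenfunction $w_1$ of one sign on $D^+$ (the usual ground-state argument on a connected domain: $|w_1|$ is again a Rayleigh minimizer, Harnack forces it nonvanishing in the interior, and two orthogonal one-signed functions cannot coexist), and $\nu_1\ge\mu_2$ because the odd extension of $w_1$ is a nonconstant Neumann eigenfunction of $D$.

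Now suppose toward a contradiction that $u_1,u_2$ are two orthogonal antisymmetric Neumann eigenfunctions with eigenvalues in $\{\mu_2,\mu_3\}$ (any eigenfunction in the span of the $\mu_2$- and $\mu_3$-eigenspaces has eigenvalue $\mu_2$ or $\mu_3$, since that span is $-\Delta$-invariant). Their restrictions to $D^+$ are orthogonal mixed eigenfunctions with eigenvalues $\le\mu_3$, so the min--max principle gives $\nu_2\le\mu_3$; with simplicity of $\nu_1$ this becomes $\nu_1<\nu_2\le\mu_3$, and since $\nu_2$ is itself a Neumann eigenvalue of $D$ exceeding $\nu_1\ge\mu_2$, necessarily $\nu_2=\mu_3$. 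Let $w_2$ be a mixed eigenfunction for $\nu_2$. Being orthogonal to the sign-definite $w_1$, it changes sign on $D^+$, hence has at least two nodal domains there, so its odd extension $\widetilde w_2$ has at least four nodal domains on $D$ (it vanishes on $\ell$, so no nodal domain straddles $\ell$). But $\widetilde w_2$ is an eigenfunction of $D$ for $\nu_2=\mu_3$, and Courant's nodal domain theorem bounds the nodal domains of an eigenfunction at the level of the third eigenvalue by three, a contradiction. Hence the antisymmetric functions among the eigenfunctions of $\mu_2$ and $\mu_3$ span a subspace of dimension at most one.

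To conclude, note that $\sigma$ commutes with $-\Delta$ and preserves the Neumann condition, so the symmetric and antisymmetric parts of any eigenfunction are again eigenfunctions (or zero) for the same eigenvalue; thus $E_{\mu_2}$ and $E_{\mu_3}$ each split into symmetric and antisymmetric pieces. Since the span of these two eigenspaces has dimension at least $2$ while its antisymmetric part has dimension at most $1$, at least one of $E_{\mu_2}$, $E_{\mu_3}$ carries a nonzero symmetric eigenfunction, which is the second assertion. The final sentence of the statement I would treat purely as an illustration: for narrow subequilateral triangles and narrow sectors the known eigenvalue ordering shows $E_{\mu_2}$ and $E_{\mu_3}$ to be entirely symmetric, so the bound above is attained with a trivial antisymmetric subspace.

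I expect the delicate step to be the nodal-domain count: what makes a second independent antisymmetric mode impossible is precisely that reflecting a mixed eigenfunction of $D^+$ orthogonal to the ground state turns an already $\ge 2$ nodal-domain count into $\ge 4$, overshooting Courant's bound of $3$ at the level of $\mu_3$. The only other points needing care are the simplicity and sign-definiteness of the first mixed eigenvalue (where connectedness of $D^+$ enters) and the bookkeeping identifying $\nu_2$ with $\mu_3$.
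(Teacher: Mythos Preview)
Your argument is correct and ultimately rests on the same mechanism as the paper's: an antisymmetric eigenfunction that changes sign on $D^+$ has at least four nodal domains, overshooting Courant's bound at the level of $\mu_3$. The paper reaches this in two lines by working directly with the given $u_1,u_2$: since both are odd, $\int_D u_1u_2 = 2\int_{D^+} u_1u_2$, so orthogonality forces at least one of them to change sign on $D^+$, and that one already violates Courant. You instead pass through the mixed Dirichlet--Neumann spectrum $\nu_1<\nu_2\le\cdots$ on $D^+$, establish simplicity and sign-definiteness of the ground state, pin down $\nu_2=\mu_3$, and apply the nodal count to a fresh $\nu_2$-eigenfunction $w_2$ rather than to one of the original $u_i$. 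The detour is unnecessary for the lemma itself---the paper never names the mixed problem---but your explicit splitting $E_{\mu_j}=E_{\mu_j}^{\mathrm{sym}}\oplus E_{\mu_j}^{\mathrm{anti}}$ together with the dimension count does make the deduction of the second assertion (that one of $\mu_2,\mu_3$ has a symmetric eigenfunction) more transparent than in the paper, where it is simply asserted.
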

\begin{proof}
  Suppose that the line of symmetry divides $D$ into $D^+$ and $D^-$, see \autoref{figanti}.
  Suppose also there are two orthogonal antisymmetric eigenfunctions in the span of the eigenspaces of $\mu_2$ and $\mu_3$. One of them must change sign in $D^+$ (and by antisymmetry in $D^-$), otherwise these eigenfunctions would not be orthogonal. This particular eigenfunction will have at least 4 nodal domains, contradicting Courant's nodal domain theorem.
\end{proof}

Let $\lambda_1(D)$ be the smallest Dirichlet eigenvalue of $D$. To prove the next result we need the following eigenvalue comparison result
\begin{theorem}[Friedlander '95]\label{fri95}
  For convex domains
   \begin{align*}
     \lambda_1(D)\ge \mu_3(D).
   \end{align*}
\end{theorem}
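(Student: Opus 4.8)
The plan is to apply the Courant--Fischer min--max principle to a carefully chosen three-dimensional subspace $V\subset H^1(D)$ containing the constants, and to show that the Rayleigh quotient
\[
  \mathcal R(f):=\frac{\int_D|\nabla f|^2\,dx}{\int_D f^2\,dx}
\]
never exceeds $\lambda_1(D)$ on $V$; since the Neumann eigenvalue $\mu_1(D)=0$ is realized by the constant function, this yields $\mu_3(D)\le\lambda_1(D)$. I would take $V=\operatorname{span}\{1,\partial_1 u,\partial_2 u\}$, where $u>0$ in $D$ is the first Dirichlet eigenfunction (simple, one-signed, $-\Delta u=\lambda_1 u$, $u|_{\partial D}=0$). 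Writing $\nu$ for the outer unit normal, one has $\int_D\partial_j u=\int_{\partial D}u\,\nu_j\,dS=0$ because $u$ vanishes on $\partial D$, so $\partial_1 u$ and $\partial_2 u$ are $L^2(D)$-orthogonal to the constants, and they are linearly independent (a relation $\alpha\,\partial_1 u+\beta\,\partial_2 u\equiv 0$ would force $u$ to be constant in the direction $(\alpha,\beta)$, impossible for a positive function vanishing on $\partial D$). Hence $V$ is three-dimensional, and a short computation shows $\max_{0\ne f\in V}\mathcal R(f)=\max_{\xi\in\R^2\setminus\{0\}}\mathcal R(\partial_\xi u)$, where $\partial_\xi u:=\xi_1\partial_1 u+\xi_2\partial_2 u$. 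It therefore suffices to prove $\mathcal R(\partial_\xi u)\le\lambda_1$ for every $\xi$.

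Since $\Delta(\partial_\xi u)=\partial_\xi(\Delta u)=-\lambda_1\,\partial_\xi u$, Green's identity gives
\[
  \int_D|\nabla\partial_\xi u|^2\,dx-\lambda_1\int_D(\partial_\xi u)^2\,dx=\int_{\partial D}(\partial_\xi u)\,\partial_\nu(\partial_\xi u)\,dS,
\]
so $\mathcal R(\partial_\xi u)-\lambda_1$ has the sign of this boundary integral, and everything reduces to showing it is $\le 0$ --- here convexity enters. On $\partial D$ the condition $u\equiv 0$ kills the tangential derivative of $u$, so $\nabla u=(\partial_\nu u)\,\nu$ there; differentiating $u|_{\partial D}=0$ twice in the arclength variable and using $\Delta u\equiv 0$ on $\partial D$, one expresses the boundary trace of $\nabla^2 u$ in terms of $\partial_\nu u$, its arclength derivative, and the signed curvature $\kappa$ of $\partial D$. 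Substituting this and integrating by parts once in arclength, the terms recombine --- using that $\{\nu,\tau\}$ is an orthonormal frame along $\partial D$ --- into the clean, direction-independent expression
\[
  \int_{\partial D}(\partial_\xi u)\,\partial_\nu(\partial_\xi u)\,dS=-\tfrac12\,|\xi|^2\int_{\partial D}\kappa\,(\partial_\nu u)^2\,dS,
\]
which is $\le 0$ precisely because $D$ is convex, i.e.\ $\kappa\ge 0$. This gives $\mathcal R(\partial_\xi u)\le\lambda_1$ for all $\xi$, hence $\mu_3(D)\le\lambda_1(D)$; in fact the inequality is strict, since the boundary of a bounded domain cannot have identically vanishing curvature. (In $\R^d$ the same scheme with $V=\operatorname{span}\{1,\partial_1 u,\dots,\partial_d u\}$ would give $\mu_{d+1}(D)\le\lambda_1(D)$.)

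I expect the main obstacle to be rigor at the boundary rather than the mechanism: a general convex domain --- in particular a triangle --- has only Lipschitz boundary, so the pointwise Hessian identities on $\partial D$ and the arclength integration by parts above are not literally available, and $\kappa$ must be read as a nonnegative measure with atoms at the corners. I would handle this by carrying out the argument first for smooth, strictly convex domains, where $u\in C^\infty(\overline D)$ and everything is classical, and then approximating an arbitrary convex $D$ from inside and from outside by such domains and passing to the limit, using continuity of $\lambda_1$ and $\mu_3$ under Hausdorff convergence of convex bodies. One also uses the standard facts that the first Dirichlet eigenvalue of a convex domain is simple with a positive eigenfunction (so that $u$ is well defined) and that the Dirichlet problem on a convex domain is $H^2$-regular (so that $\partial_j u\in H^1(D)$ and $V$ is a legitimate trial space).
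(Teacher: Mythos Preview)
The paper does not prove this theorem; it is quoted as a known result (attributed to Friedlander) and invoked as a black box in \autoref{line} and \autoref{loop}. So there is no ``paper's own proof'' to compare against.

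Your argument is correct and is the classical route to this inequality for convex planar domains. The boundary identity you describe does come out exactly as claimed: writing $a=\xi\cdot\nu$, $b=\xi\cdot\tau$ and using that on $\partial D$ one has $\partial_\tau u=0$, $\tau^T\nabla^2 u\,\tau=\kappa\,\partial_\nu u$, $\nu^T\nabla^2 u\,\tau=\partial_s(\partial_\nu u)$, and $\nu^T\nabla^2 u\,\nu=-\kappa\,\partial_\nu u$ (from $\Delta u=0$ on $\partial D$), one gets
\[
(\partial_\xi u)\,\partial_\nu(\partial_\xi u)=-a^2\kappa(\partial_\nu u)^2+\tfrac12\,ab\,\partial_s\big[(\partial_\nu u)^2\big];
\]
integrating by parts in $s$ and using $\partial_s(ab)=\kappa(b^2-a^2)$ yields precisely $-\tfrac12|\xi|^2\int_{\partial D}\kappa(\partial_\nu u)^2\,dS\le 0$. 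The reduction $\max_{f\in V}\mathcal R(f)=\max_{\xi\ne 0}\mathcal R(\partial_\xi u)$ is exactly as you say, using $\int_D\partial_j u=0$. Your approximation scheme for non-smooth convex $D$ is the standard way to finish; note only that passing to the limit gives the non-strict inequality $\mu_3\le\lambda_1$, which is all the paper ever uses (the strict inequalities in \autoref{line} and \autoref{loop} come from the strict domain monotonicity $\lambda_1(N)>\lambda_1(D)$, not from this step).
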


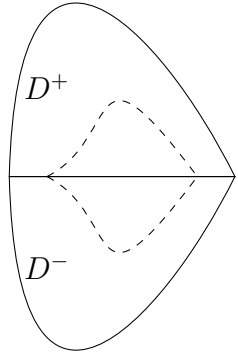
\begin{figure}[t]
  \begin{center}
\begin{tikzpicture}[scale=1]
  \draw (0,0) .. controls (0.1,4) and (2,2) .. (3,0) -- cycle; 
  \draw (0,0) .. controls (0.1,-4) and (2,-2) .. (3,0) -- cycle; 
  \draw[dashed] (0.5,0) .. controls (1.5,0.5) and (1,2) .. (2.5,0);
  \draw[dashed] (0.5,0) .. controls (1.5,-0.5) and (1,-2) .. (2.5,0);
  \draw (0.5,1.2) node {$D^+$};
  \draw (0.5,-1.2) node {$D^-$};
\end{tikzpicture}
  \end{center}
  \caption{Nodal line cannot start and end on the straight part of the boundary (as shown).}
  \label{figstr}
\end{figure}

\begin{lemma}[Polymath7 \cite{poly}] \label{line}
  Suppose that we have a convex domain $D^+$ with a straight part of the boundary, such that the domain $D$ obtained by mirroring about the straight part is also convex, see \autoref{figstr}.
  The second Neumann eigenvalue of $D^+$ cannot have an eigenfunction with nodal line starting and ending on this straight piece of the boundary (including the endpoints of the straight piece).
\end{lemma}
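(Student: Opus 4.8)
The plan is to argue by contradiction, converting a hypothetical nodal line of this type into a positive Dirichlet eigenfunction on a doubled subdomain of $D$, and then colliding the resulting eigenvalue estimate with Friedlander's inequality (\autoref{fri95}) together with the symmetric-mode restriction (\autoref{symmetric}).

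So suppose $\varphi$ realizes $\mu_2(D^+)$ and its nodal line $\gamma$ has both endpoints on the straight segment $S\subset\partial D^+$ about which we mirror. By Courant's nodal domain theorem $\varphi$ has exactly two nodal domains, so $\gamma$ separates $D^+$ into them; one of them, call it $\Omega$, has boundary $\gamma$ together with the sub-segment $S_0\subset S$ bounded by the two endpoints of $\gamma$, and we may assume $\varphi>0$ on $\Omega$. On $S_0$ the function $\varphi$ satisfies the Neumann condition, since $S_0\subset\partial D^+$; hence the even reflection of $\varphi$ across the line of $S$ extends $\varphi|_\Omega$ to a classical solution $\widetilde\varphi$ of $\Delta\widetilde\varphi=-\mu_2(D^+)\widetilde\varphi$ on $\widetilde\Omega:=\Omega\cup S_0^{\circ}\cup\Omega'\subset D$, where $\Omega'$ is the mirror image of $\Omega$. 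Since $\widetilde\varphi>0$ in $\widetilde\Omega$ and $\widetilde\varphi=0$ on $\partial\widetilde\Omega=\overline\gamma\cup\overline{\gamma'}$, it is a positive Dirichlet eigenfunction of $\widetilde\Omega$ and therefore its ground state, so $\mu_2(D^+)=\lambda_1(\widetilde\Omega)$.

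Next I would invoke domain monotonicity. The second nodal domain of $\varphi$ is nonempty (a second eigenfunction changes sign) and disjoint from $\widetilde\Omega$, so $\widetilde\Omega$ is a connected proper subdomain of $D$; strict monotonicity of the first Dirichlet eigenvalue gives $\lambda_1(\widetilde\Omega)>\lambda_1(D)$. Since $D$ is convex, \autoref{fri95} yields $\lambda_1(D)\ge\mu_3(D)$, so altogether $\mu_2(D^+)>\mu_3(D)$. On the other hand, even reflection across $S$ identifies the Neumann eigenfunctions of $D^+$ with the symmetric Neumann eigenfunctions of $D$; in particular $\mu_1(D^+)=0=\mu_1(D)$, and $\mu_2(D^+)$ equals the smallest positive Neumann eigenvalue of $D$ possessing a symmetric eigenfunction. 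By \autoref{symmetric}, at least one of $\mu_2(D),\mu_3(D)$ has a symmetric eigenfunction, so — together with the constant eigenfunction of $\mu_1=0$ — the domain $D$ has at least two symmetric Neumann eigenvalues not exceeding $\mu_3(D)$. Hence $\mu_2(D^+)\le\mu_3(D)$, contradicting $\mu_2(D^+)>\mu_3(D)$.

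The routine verifications are that the even reflection of a Neumann eigenfunction across a flat boundary piece is again a smooth eigenfunction on the doubled region, and that $\widetilde\Omega$ is a genuine domain even when an endpoint of $\gamma$ coincides with an endpoint of $S$ (the case explicitly allowed in the statement). The one place that requires real care — and where the two hypotheses are actually used — is the last step: one must exclude the scenario in which both $\mu_2(D)$ and $\mu_3(D)$ are carried by antisymmetric eigenfunctions, which is precisely the content of \autoref{symmetric}, while convexity of $D$ is what makes Friedlander's inequality applicable. Balancing these two facts against the ground-state identity $\mu_2(D^+)=\lambda_1(\widetilde\Omega)$ is the heart of the argument, and I expect it to be the main obstacle.
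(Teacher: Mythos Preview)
Your argument is correct and follows essentially the same route as the paper: reflect the nodal domain across the straight boundary to obtain a proper subdomain $\widetilde\Omega\subset D$ with $\mu_2(D^+)=\lambda_1(\widetilde\Omega)>\lambda_1(D)\ge\mu_3(D)$ via Friedlander, then contradict this using \autoref{symmetric} to force $\mu_2(D^+)\le\mu_3(D)$. Your write-up is a bit more explicit about why $\lambda_1(\widetilde\Omega)=\mu_2(D^+)$ (positive Dirichlet eigenfunction) and about the correspondence between $\mu_2(D^+)$ and the lowest positive symmetric mode of $D$, but the ideas and structure match the paper's proof.
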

\begin{proof}
Suppose the second eigenvalue $\mu_2(D^+)$ has an eigenfunction $\varphi$ with nodal line starting and ending on the same straight piece of the boundary. 
We can unfold the domain $D^+$ and $\varphi$ to get a symmetric domain $S=D^+\cup D^-$ and its symmetric eigenfunction with closed nodal domain $N$ inside. The Dirichlet eigenvalue of this nodal domain ($\lambda_1(N)=\mu_2(D^+)$) is strictly larger than the first Dirichlet eigenvalue $\lambda_1(S)$. This one is however larger than or equal to $\mu_3(S)$ (\autoref{fri95}). Hence 
\begin{align}\label{ineq}
\mu_2(D^+)=\lambda_1(N)>\lambda_1(S)\ge \mu_3(S).
\end{align}

Eigenfunction $\varphi$ is the lowest symmetric mode of $S$, since it belongs to the smallest positive eigenvalue on $D^+$. By \autoref{symmetric} it must belong to either $\mu_2(S)$ or $\mu_3(S)$. In either case we get a contradiction with \eqref{ineq}.
\end{proof}
\newcommand{\regular}[1]{
\pgfmathsetmacro{\n}{#1}
\pgfmathsetmacro{\a}{360/\n}
\coordinate (a) at (0,0);
\foreach \x in {0,\a,...,360} 
{
   \draw (a) -- (\x:1) coordinate (a) -- (0,0);
}
\clip (0,0) -- (0:1) -- (\a:1) -- cycle;
\draw[fill=gray, opacity=0.3] (0,0) -- (\a/2:1) -- (1,0) -- cycle;;
}

  \subsection{Nodal line approach}
  In this section we collect the results needed for the approach due Miyamoto \cite{Mi12}. However, we generalize most of the key lemmas to avoid the symmetry assumptions for triangles.
 
  First we need the following consequence of real analyticity for eigenfunctions
  \begin{lemma}[{\cite[Corollary 2.2]{Mi12}}]\label{branches}
    Suppose $u$ satisfies $\Delta u=-\mu u$ on $D$ (without any boundary condition). If $u(x,y)=u_x(x,y)=u_y(x,y)=0$ (degenerate zero) then either $u\equiv 0$ or $\{u=0\}$ has at least 4 branches from $(x,y)$ and $\{u>0\}$ (and $\{u<0\}$) has at least 2 connected components near $(x,y)$ (but these might be globally connected).
  \end{lemma}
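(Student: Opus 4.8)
The plan is to reduce the local behaviour of $u$ at the degenerate zero $(x,y)$ to that of the leading homogeneous term of its Taylor expansion. Because $\Delta u+\mu u=0$ has constant coefficients, $u$ is real-analytic near $(x,y)$, so either $u\equiv 0$ or $u$ vanishes at $(x,y)$ to some finite order $k$; the hypothesis $u(x,y)=u_x(x,y)=u_y(x,y)=0$ forces $k\ge 2$. Write the Taylor expansion of $u$ about $(x,y)$ as $u=P_k+R$, with $P_k\not\equiv 0$ homogeneous of degree $k$ and $R$ vanishing to order $k+1$. Applying $\Delta$ to the equation, the lowest-order part of $\Delta u$ is $\Delta P_k$, which is homogeneous of degree $k-2$, while $-\mu u$ vanishes to order $k$; matching the lowest-order terms gives $\Delta P_k=0$, so $P_k$ is a nonzero homogeneous harmonic polynomial of degree $k\ge 2$.

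In two variables such a polynomial is, in polar coordinates centred at $(x,y)$, of the form $P_k=c\,\rho^{k}\cos(k\theta-\theta_0)$ with $c\ne 0$. Its nodal set consists of exactly $2k\ge 4$ rays from the origin at equal angular spacing $\pi/k$, and the sign of $P_k$ alternates across consecutive rays, so $\{P_k>0\}$ and $\{P_k<0\}$ each have $k\ge 2$ connected components near the origin.

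The remaining, and main, step is to transfer this picture from $P_k$ to $u$ itself. The tool is the classical asymptotic expansion of Bers (equivalently Hartman--Wintner): after rescaling, $u_{\rho}(z):=\rho^{-k}u\big((x,y)+\rho z\big)$ converges to $P_k$ in $C^{1}_{\mathrm{loc}}$ as $\rho\to 0^{+}$. On any closed sub-sector on which $P_k$ is bounded away from $0$, this $C^{1}$-convergence forces $u$ to keep that sign for all small $\rho$, which already yields at least $k\ge 2$ local connected components of each of $\{u>0\}$ and $\{u<0\}$. For the nodal branches one uses that $\partial_\theta P_k\ne 0$ along every zero ray of $P_k$; combined with the same convergence, an intermediate-value (or degree) argument produces exactly one arc of $\{u=0\}$ issuing from $(x,y)$ near each of the $2k$ rays, so $\{u=0\}$ has at least $4$ branches. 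Equivalently one may invoke Cheng's structure theorem for nodal sets, which asserts that near $(x,y)$ the set $\{u=0\}$ is the image of $\{P_k=0\}$ under a local $C^{1}$ diffeomorphism fixing $(x,y)$, giving both conclusions at once. I expect the delicate point to be precisely this last passage --- from $C^{1}$-closeness of $u_\rho$ and $P_k$ to the exact number of nodal branches --- which is where the Bers/Hartman--Wintner/Cheng machinery is essential; the only alternative to the stated branch count is $u\equiv 0$, the degenerate case $k=\infty$ excluded at the outset.
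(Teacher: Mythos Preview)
Your proof sketch is correct and follows the standard route to this result (Taylor expansion, harmonicity of the leading homogeneous term, then Bers/Hartman--Wintner or Cheng's structure theorem to pass from $P_k$ to $u$). Note, however, that the paper does not actually prove this lemma: it is simply quoted from Miyamoto \cite[Corollary~2.2]{Mi12}, so there is no proof in the paper to compare against. Your argument is essentially the proof one would find behind that citation.
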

  
  For an arbitrary function $u$ on $D$, the nodal set $\{u=0\}$ may contain a loop. More precisely, there might exist a nodal domain with the boundary contained in $\{u=0\}$. However, eigenfunctions often have no loops. We generalize \cite[Lemma 2.3]{Mi12} (for convex domains) using \autoref{fri95}.
  \begin{lemma}\label{loop}
    Let $D$ be a convex domain, and $u$ be any function satisfying $\Delta u=-\mu u$ on $D$ (without boundary conditions). If $\mu\le \mu_3(D)$, then $\{u=0\}$ has no loop in $\overline{D}$.
    
    In particular nodal lines of partial derivatives of the first two eigenfunctions cannot have loops.
  \end{lemma}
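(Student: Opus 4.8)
The plan is to argue by contradiction, turning a loop in the nodal set of $u$ into a Dirichlet eigenfunction on a subdomain and then applying Friedlander's inequality (\autoref{fri95}). Suppose $\{u=0\}$ contains a loop in $\overline{D}$; by this I mean there is a Jordan curve $\gamma\subseteq\{u=0\}$ bounding an open region $N\subseteq D$ (using convexity of $D$, a Jordan curve inside $\overline D$ encloses a region contained in $D$). First I would dispose of the degenerate possibility that $u$ vanishes to infinite order somewhere, which forces $u\equiv 0$ and makes the statement vacuous; so assume $u\not\equiv0$. Then $u$ restricted to $N$ satisfies $\Delta u=-\mu u$ in $N$ with $u=0$ on $\partial N\subseteq\gamma$, so $\mu$ is a Dirichlet eigenvalue of $N$, whence $\mu\ge\lambda_1(N)$.

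Next I would compare $\lambda_1(N)$ with $\lambda_1(D)$. Since $N\subsetneq D$ is a proper subdomain (the loop cannot be all of $\partial D$ because $\partial D$ carries no zero set forced on it — $u$ has no boundary condition, but more simply $N$ is enclosed by a curve lying in the open set $D$ away from at least part of $\partial D$, or one invokes that a nodal \emph{loop} by definition bounds a region compactly contained after a small retraction), strict domain monotonicity of the first Dirichlet eigenvalue gives $\lambda_1(N)>\lambda_1(D)$. Combining with \autoref{fri95}, $\lambda_1(D)\ge\mu_3(D)$, we obtain
\begin{align*}
  \mu \;\ge\; \lambda_1(N) \;>\; \lambda_1(D) \;\ge\; \mu_3(D),
\end{align*}
contradicting the hypothesis $\mu\le\mu_3(D)$. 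Hence no loop exists.

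For the final sentence, I would note that if $\varphi$ is an eigenfunction for $\mu_1$ or $\mu_2$, then any first partial derivative $u=\partial_x\varphi$ (or $\partial_y\varphi$, or any directional derivative) again solves $\Delta u=-\mu_k u$ on $D$ with $k\in\{1,2\}$ and no imposed boundary condition, and $\mu_k\le\mu_2\le\mu_3(D)$. So the first part applies verbatim and these nodal lines have no loops.

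The main obstacle I anticipate is the topological bookkeeping in the phrase ``loop in $\overline D$'': one must be careful that a Jordan curve in the \emph{closed} convex domain still bounds a region on which one genuinely has a Dirichlet problem with zero boundary data, and that this region is a proper subset so that \emph{strict} domain monotonicity applies (a non-strict inequality would not close the argument, since equality $\mu=\mu_3(D)$ is allowed). Convexity of $D$ is what rules out the curve hugging a nonconvex piece of $\partial D$ and enclosing something outside $D$; I would state this carefully, perhaps citing the corresponding step in \cite[Lemma 2.3]{Mi12} which handles the smooth-boundary case, and observe that our only change is the substitution of \autoref{fri95} for the estimate $\lambda_1(D)\ge\mu_3(D)$ that holds automatically under their stronger hypotheses.
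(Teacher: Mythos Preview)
Your proposal is correct and follows essentially the same route as the paper: assume a loop encloses a region, obtain a Dirichlet eigenvalue there, and chain domain monotonicity with \autoref{fri95} to contradict $\mu\le\mu_3(D)$. The paper's version is terser (it writes $\mu=\lambda_1(F)$, implicitly taking $F$ to be a nodal domain rather than an arbitrary enclosed region), while your $\mu\ge\lambda_1(N)$ is the more careful formulation; both close the argument the same way.
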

  \begin{proof}
    Suppose there is a loop and let $F$ be the set enclosed by the loop. Then
    \begin{align*}
      \mu=\lambda_1(F)>\lambda_1(D)\ge \mu_3(D)\ge \mu.
    \end{align*}
    Giving contradiction.
  \end{proof}  
  We can also strengthen the first part of \cite[Lemma 2.4]{Mi12}.
  \begin{lemma}\label{nodegzero}
    If $u$ is an eigenfunction for convex $D$ belonging to $\mu_2$ or $\mu_3$ then $u$ does not have a degenerate zero in $D$.
  \end{lemma}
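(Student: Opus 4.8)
The plan is to argue by contradiction. Suppose that an eigenfunction $u$ belonging to some $\mu\in\{\mu_2(D),\mu_3(D)\}$ has a degenerate zero $p\in D$. I will derive either a loop in $\{u=0\}$, which is impossible by \autoref{loop} (since $\mu\le\mu_3(D)$), or the existence of at least four nodal domains of $u$, which is impossible by Courant's nodal domain theorem.

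First I would record the local picture. Since $u$ solves $\Delta u=-\mu u$ in $D$ it is real-analytic near the interior point $p$ and is not identically zero, so \autoref{branches} applies: a small disc $B$ centred at $p$ is cut by $\{u=0\}$ into $2k$ curvilinear sectors with $k\ge 2$, on which $u$ has a definite sign, alternating as one goes around $p$ (crossing one nodal arc flips the sign of $u$). In particular $\{u>0\}\cap B$ has at least two components (``positive sectors'') and so does $\{u<0\}\cap B$, and between any two positive sectors each of the two arcs of $\partial B$ joining them contains a negative sector. I would also note that $u$ changes sign, since $\mu>\mu_1=0$ forces $u$ to be orthogonal to the constants; hence $u$ has at least two nodal domains, and at most three by Courant.

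The heart of the matter is the following dichotomy. Case 1: two sectors of the same sign, say two positive sectors $P$ and $P'$, lie in the same nodal domain $\Omega$. I would choose $a\in P$, $a'\in P'$ close to $p$ with the straight segments $\overline{pa}$, $\overline{pa'}$ lying, apart from $p$, in $P$ and $P'$; join $a$ to $a'$ by a simple arc inside the open connected set $\Omega$; and, after an arbitrarily small perturbation, obtain a Jordan curve $\Gamma$ with $\Gamma\subseteq\overline{\Omega}\cap D\subseteq\{u\ge 0\}\cap D$ and $\Gamma\cap\{u=0\}=\{p\}$. Because $D$ is convex, hence simply connected, the bounded component $U$ of $\R^2\setminus\Gamma$ satisfies $\overline U\subseteq D$. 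Near $p$ the curve $\Gamma$ consists only of the two segments through $P$ and $P'$, which split a small disc about $p$ into two pieces, exactly one of which lies in $U$; since each arc of $\partial B$ between $P$ and $P'$ meets a negative sector, the piece lying in $U$ contains a negative sector $N$. Let $\Omega_1$ be the nodal domain containing $N$. Then $\Omega_1$ is connected, disjoint from $\Gamma$ (which lies in $\{u\ge 0\}$), and meets $U$, so $\Omega_1\subseteq U$ and hence $\overline{\Omega_1}\subseteq\overline U\subseteq D$. Consequently $\partial\Omega_1\subseteq\{u=0\}$, i.e.\ $\Omega_1$ is a nodal domain enclosed by a loop of $\{u=0\}$, contradicting \autoref{loop}. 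The same argument handles two negative sectors lying in one nodal domain. Case 2: no two sectors of the same sign lie in the same nodal domain. Then the (at least two) positive sectors lie in at least two distinct positive nodal domains, and likewise for the negative sectors, so $u$ has at least four nodal domains, contradicting Courant. In either case we have a contradiction, so $u$ has no degenerate zero in $D$.

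I expect the only subtle point to be the planar topology in Case 1: checking carefully that the Jordan curve built from an arc inside one nodal domain together with two short segments at the degenerate zero must capture an opposite-sign sector, and therefore a whole opposite-sign nodal domain, in its interior, using only the branch structure from \autoref{branches} and the simple-connectivity of the convex domain $D$. The remaining ingredients (the sign change of $u$, the Courant bound, and the domain-monotonicity argument packaged in \autoref{loop}) are routine.
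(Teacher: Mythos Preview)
Your proposal is correct and follows essentially the same route as the paper: use \autoref{branches} to get alternating local sectors at the degenerate zero, then argue that either two same-sign sectors are globally connected---in which case a path through that nodal domain closes up to trap an opposite-sign nodal domain inside $D$, contradicting \autoref{loop}---or else all sectors lie in distinct nodal domains, giving at least four and contradicting Courant. Your write-up is more explicit about the Jordan-curve construction and the planar topology than the paper's short argument, but the strategy and the two key inputs (\autoref{loop} and Courant) are identical.
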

  \begin{proof}
    Degenerate zero implies at least 4 branches for $\{u=0\}$. Therefore locally we have two nodal domains where eigenfunction is positive, and between them there are two domains with negative sign. If the two positive nodal domains were globally connected, then there would be a curve that connects a point near the critical point from one of them to a point in the other. Hence the negative nodal domain between the two positive subdomains is closed inside the original domain. Hence the negative nodal domain forms a closed loop as part of the nodal set, contradicting \autoref{loop}. Hence the positive nodal domains near the critical point are not globally connected, similarly the negative nodal domains. This contradicts Courant's nodal domain theorem, since we have at least 4 nodal domains.
  \end{proof}
Define
  \begin{align*}
    \mathcal H_\mu[u]=\int_D (|\nabla u|^2-\mu u^2)dA
  \end{align*}
  Then by variational formula for eigenvalues
  \begin{lemma}\label{Hcond} 

    If $\int_D u=0$, then $\mathcal H_{\mu_2}[u]\ge 0$.

    If $\int_D u=0$ and $u$ is symmetric, then $\mathcal H_{\mu_s}[u]\ge 0$. Here $\mu_s$ is the lowest symmetric mode for a symmetric $D$.

    In general whenever $u$ is a valid test function for $\lambda$, then $\mathcal H_\lambda[u]\ge 0$. Here $\lambda$ can be any type of eigenvalue.
  \end{lemma}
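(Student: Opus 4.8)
The plan is to read off all three statements from the Rayleigh--quotient (Courant--Fischer) characterization of Neumann eigenvalues; the only content is matching the orthogonality constraint that defines a ``valid test function'' with the hypotheses in each case. I would prove the general (third) assertion first, since the other two are special instances of it.

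Recall that if $\lambda$ is an eigenvalue of a self-adjoint problem on $D$ whose quadratic form is $u\mapsto\int_D|\nabla u|^2$ on an admissible space $V\subseteq H^1(D)$, and $\psi_1,\dots,\psi_{k-1}$ are eigenfunctions for the eigenvalues strictly below $\lambda$, then
\begin{align*}
  \lambda=\min\Big\{\tfrac{\int_D|\nabla u|^2}{\int_D u^2}:\ u\in V,\ u\perp\psi_1,\dots,\psi_{k-1},\ u\not\equiv0\Big\}.
\end{align*}
By definition ``$u$ is a valid test function for $\lambda$'' means precisely that $u$ lies in this constrained set (the case $u\equiv0$ giving $\mathcal H_\lambda[u]=0$ trivially), whence $\int_D|\nabla u|^2\ge\lambda\int_D u^2$, that is, $\mathcal H_\lambda[u]\ge0$. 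For the first statement, apply this with $V=H^1(D)$ and $\lambda=\mu_2$: the only eigenfunction below $\mu_2$ is the constant (since $\mu_1=0$), and orthogonality to it is exactly $\int_D u=0$; hence that hypothesis alone makes $u$ admissible for $\mu_2$.

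For the second statement I would restrict to the subspace $V_{\mathrm{sym}}\subseteq H^1(D)$ of functions symmetric about the line of symmetry of $D$. The Neumann Laplacian preserves $V_{\mathrm{sym}}$ (by symmetry of $D$), and, as already observed in the preliminary discussion, its spectrum on $V_{\mathrm{sym}}$ is the Neumann spectrum of the half-domain, with bottom eigenvalue $0$ (the constant) and next eigenvalue $\mu_s$. A symmetric $u$ with $\int_D u=0$ is orthogonal within $V_{\mathrm{sym}}$ to the constant, hence a valid test function for $\mu_s$, and $\mathcal H_{\mu_s}[u]\ge0$ follows as before. No step here is a genuine obstacle; the only point deserving a sentence of care is the invariance of $V_{\mathrm{sym}}$ under the resolvent, which guarantees that the restricted eigenvalues behave as claimed, and this is immediate from the reflection symmetry of $D$.
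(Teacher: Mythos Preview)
Your proposal is correct and is precisely the argument the paper has in mind: the lemma is stated immediately after the phrase ``by variational formula for eigenvalues'' and is given no separate proof, so you have simply written out the Rayleigh--quotient justification that the paper leaves implicit. The only additional detail you supply beyond the paper is the remark on invariance of $V_{\mathrm{sym}}$, which is fine but not something the paper pauses over.
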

  This observation was used by Miyamoto \cite{Mi12} in contradiction arguments (one needs to construct $u$ such that $\mathcal H[u]<0$).
  Suppose that $\Delta u=-\mu u$ on $D$ (no boundary conditions).
  Then 
  \begin{align*}
    \mathcal H_\mu[u]=\int_{\partial D} u\partial_\nu ud\sigma.
  \end{align*}
  In particular for any mixed Dirichlet-Neumann boundary conditions $\mathcal H_\mu[u]=0$. However, one can get a contradiction in the above lemma by controlling the sign of the product $u\partial_\nu u$.

  Note that in \autoref{line} we can drop the assumption that we have an eigenfunction, cf. \autoref{loop}.
\begin{lemma}\label{line2}
  Let $D$ be a convex domain with a straight piece of boundary, $\Delta u=-\mu_2 u$ and $\partial_\nu u\le 0$ on the straight piece whenever $u>0$. Then $\partial\{u>0\}$ cannot have a connected component bounded by a curve starting and ending on the straight piece of the boundary.
\end{lemma}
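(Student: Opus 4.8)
The plan is to follow the proof of \autoref{line} almost verbatim, the only change being that the identity ``the first Dirichlet eigenvalue of a nodal domain equals the eigenvalue'' is replaced by an \emph{inequality} squeezed out of the one–sided condition $u\,\partial_\nu u\le 0$ on the straight piece. Throughout, $\mu_2$ is the second Neumann eigenvalue $\mu_2(D)$.

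Suppose, for contradiction, that $\partial\{u>0\}$ has a connected component which, together with a sub-segment $[PQ]$ of the straight piece $\ell$, encloses a region $R\subset D$ with $P,Q\in\ell$; then $\partial R\cap D\subset\{u=0\}$. First I would isolate a connected component $\Omega$ of $\{u>0\}$ whose boundary meets $\partial D$ only along $\ell$. In the case that $\{u>0\}$ lies on the side of the bounding curve facing $[PQ]$, one may take $\Omega\subseteq R$: it cannot cross the nodal curve $\partial R\cap D$, nor can it cross $[PQ]\subset\partial D$, so $\partial\Omega\cap\partial D\subseteq[PQ]\subseteq\ell$, while $\gamma_0:=\partial\Omega\cap D$ consists of nodal arcs reaching $\ell$. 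Put $\sigma_0:=\partial\Omega\cap\ell$. By continuity $u\ge 0$ on $\sigma_0$, and where $u>0$ the hypothesis gives $\partial_\nu u\le 0$; hence $u\,\partial_\nu u\le 0$ on $\sigma_0$, while $u=0$ on $\gamma_0$.

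Next I unfold across the line carrying $\ell$. Let $\rho$ be this reflection, $S:=D\cup\rho(D)$ (a convex domain, as in \autoref{line}), and $\widehat\Omega:=\Omega\cup\rho(\Omega)\cup\operatorname{int}(\sigma_0)\subseteq S$, a connected open set symmetric about $\ell$. Let $\widehat u\in H^1_0(\widehat\Omega)$ be the even reflection of $u|_\Omega$; it is nonzero and nonnegative. Using $\widehat u$ as a trial function for the first Dirichlet eigenvalue of $\widehat\Omega$, and then Green's identity on $\Omega$ together with $\Delta u=-\mu_2 u$,
\begin{align*}
  \lambda_1(\widehat\Omega)\ \le\ \frac{\int_{\widehat\Omega}|\nabla\widehat u|^2}{\int_{\widehat\Omega}\widehat u^2}
  \ =\ \frac{\int_{\Omega}|\nabla u|^2}{\int_{\Omega}u^2}
  \ =\ \mu_2+\frac{\int_{\sigma_0}u\,\partial_\nu u}{\int_{\Omega}u^2}
  \ \le\ \mu_2 .
\end{align*}
On the other hand, $\widehat\Omega\subsetneq S$ with $S\setminus\widehat\Omega$ of positive measure (it contains $D\setminus\overline R$ and its reflection), so strict domain monotonicity gives $\lambda_1(\widehat\Omega)>\lambda_1(S)$; \autoref{fri95} for the convex domain $S$ gives $\lambda_1(S)\ge\mu_3(S)$; and since $D$ is a half of the symmetric domain $S$, any symmetric Neumann eigenfunction of $S$ restricts to a non-constant Neumann eigenfunction of $D$ with the same eigenvalue, which is therefore $\ge\mu_2(D)$, so by \autoref{symmetric} (one of $\mu_2(S),\mu_3(S)$ carries a symmetric eigenfunction) we obtain $\mu_3(S)\ge\mu_2(D)=\mu_2$. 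Chaining,
\begin{align*}
  \mu_2\ \ge\ \lambda_1(\widehat\Omega)\ >\ \lambda_1(S)\ \ge\ \mu_3(S)\ \ge\ \mu_2 ,
\end{align*}
a contradiction.

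I expect the main difficulty to lie not in this chain but in the topological bookkeeping of the first step: extracting a component $\Omega$ with $\overline\Omega\cap\partial D\subseteq\ell$, ruling out stray nodal loops inside $R$ (here \autoref{loop} and \autoref{branches} enter, exactly as in the proof of \autoref{nodegzero}), permitting $\sigma_0$ and $\gamma_0$ to be disconnected, and handling corners and possible branch points so that Green's identity is justified. The genuinely delicate point is \emph{orientation}: the argument needs the positive set $\{u>0\}$, rather than $\{u<0\}$, to be the one pinched off against $\ell$, since the hypothesis $\partial_\nu u\le 0$ is available only where $u>0$; the opposite orientation must be excluded by a separate observation (or is simply not the configuration intended by the statement). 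Finally, as already in \autoref{line}, one tacitly uses that the doubled domain $S$ is convex.
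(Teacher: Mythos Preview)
Your argument is correct and is essentially the paper's own proof, just unpacked. The paper phrases the first step as ``$\mathcal H_{\mu_2}[u]\le 0$ on this connected component, hence $\mu_2$ is at least the mixed Dirichlet--Neumann eigenvalue of the component,'' and then says to repeat the argument of \autoref{line}; your Rayleigh-quotient computation with the even reflection $\widehat u$ is exactly this mixed-eigenvalue inequality followed by the unfolding step, and your chain $\mu_2\ge\lambda_1(\widehat\Omega)>\lambda_1(S)\ge\mu_3(S)\ge\mu_2(D)$ is precisely the contradiction in \eqref{ineq} combined with \autoref{symmetric}. Your worry about ``orientation'' is really a question about how to read the statement rather than a gap: the intended configuration is a connected component of $\{u>0\}$ pinched off against the straight piece, which is how the lemma is used in the proof of \autoref{2sides} (there the normal derivative actually vanishes on the straight side, so the sign hypothesis is automatic).
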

\begin{proof}
  If this was the case then $H_{\mu_2}[u]\le 0$ on this connected component and $\mu_2$ would be larger than or equal to the mixed Dirichlet-Neumann eigenvalue for the component. Now we can apply the argument from the proof of \autoref{line}.
\end{proof}

\section{Proofs of the main results}

\subsection{hot spots conjecture}\label{sechotspots}
Here we prove a result stronger than \autoref{pi6}. 

Note that the condition in \autoref{kitesym} 
\begin{align}\label{kitecond}
  3b^2<1-a+a^2
\end{align}
implies the condition in \autoref{mubound}
\begin{align}\label{mucond}
  b^2<a^2+(1-a)^2.
\end{align}
Therefore as soon as we can apply \autoref{kitesym}, we also have eigenvalue bound from \autoref{mubound}. Therefore \autoref{critical} implies no critical point on the side on the $x$-axis. Therefore the hot spots conjecture is true for any triangle for which the former is satisfied for two ways to put the triangle in the coordinate system (the longest or the middle side on the $x$-axis), by \autoref{2sides}. 

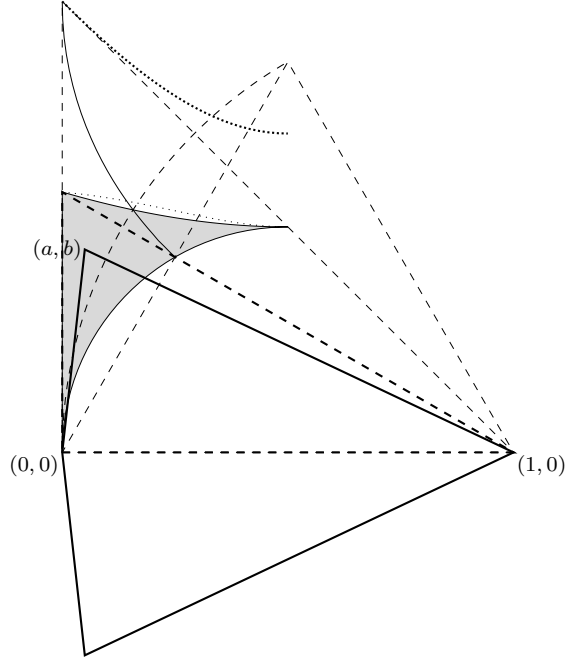
\begin{figure}[t]
  \begin{center}
  \begin{tikzpicture}[scale=6]
    \fill[gray,fill opacity=0.3,draw=black,very thin] (0,0) arc (180:90:0.5) -- plot [domain=0.5:0] (\x,{sqrt((1-\x+\x^2)/3)}) -- cycle;
    \draw[very thin] plot  [domain=0:0.5] ({(1-2*\x)*(1-2*\x)/(4*\x*\x-7*\x+4)},{sqrt(3*(\x*\x-\x+1))/(4*\x*\x-7*\x+4});
    \draw[densely dotted,thick] plot  [domain=0:0.5] (\x,{sqrt(\x*\x+(1-\x)*(1-\x))});
    \draw[very thin,dashed] (1,0) -| (0,1) -- cycle;
    \draw[very thin,dashed] (0,0) -- (1,0) -- (1/2,{sqrt(3)/2}) -- cycle;
    \draw[thick,dashed] (0,0) -- (1,0) -- (0,{1/sqrt(3)}) -- cycle;
    \draw[thick] (0,0) -- (0.05,0.45) -- (1,0) -- (0.05,-0.45) -- cycle;
    \draw[dashed] (0,0) arc (180:120:1);
    \draw[dotted,smooth] plot coordinates {(0,{1/sqrt(3)})(0.05,0.57180)(0.1,0.56458)(0.15,0.55592)(0.2,0.54618)(0.25,0.53578)(0.3,0.52531)(0.35,0.51554)(0.4,0.50740)(0.45,0.50194)(1/2,1/2)};
    \draw (0,0) node [below left=-3pt] {\tiny $(0,0)$};
    \draw (1,0) node [below right=-3pt] {\tiny $(1,0)$};
    \draw (0.05,0.45) node [left=-3pt] {\tiny $(a,b)$};
  \end{tikzpicture}
  \end{center}
  \caption{Triangle $T(a,b)$, its kite and various conditions on $a$ and $b$.}
  \label{fig1}
\end{figure}

Before we finish the proof of \autoref{pi6} we discuss \autoref{fig1}. The dashed straight lines form 3 triangles: equilateral, right isosceles and a half-equilateral right triangle (angle $\pi/6$ near $(1,0)$). The dashed arc gives $(a,b)$ pairs for isosceles triangles. Below this line the longest side is on the $x$-axis, above it the middle (or the shortest) side. Hence all triangles can be uniquely described by a pair $(a,b)$ with $(1-a)^2+b^2\le 1$, while acute triangles satisfy $a^2+b^2>a$ (lower boundary of the gray area). The gray area contains all $(a,b)$ pairs for which kite $K$ has simple and symmetric second eigenfunction according to \eqref{kitecond}, while a dotted line just above the area is a numerical curve on which the symmetric mode equals the antisymmetric mode. The thick dotted line depicts the boundary of the region given by condition \eqref{mucond} from \autoref{mubound}.

Finally, the solid curve is the inversion of the upper part of the boundary of the gray area with respect to the ``isosceles circle'' $(a-1)^2+b^2=1$. It happens that the inversion of $(a,b)$ gives a new placement for the same triangle (the middle side interchanges with the longest). Indeed, if $(a,b)$ is inside the ``isosceles circle'', then the longest side of the triangle is on the $x$-axis and has length $1$. The middle side has vertices $(a,b)$ and $(1,0)$. If we invert the point $(a,b)$ with respect to the ``isosceles circle'', then the ratio between the longest and the middle side does not change, however the middle side is now on the $x$-axis. Therefore rescaling to get the longest side of length $1$ leads to the same triangle as the original triangle (same two sides and the angle between). 

Hence the hot spots conjecture is true for any triangle in the gray area and below the solid curve. This clearly contains all triangles with the smallest angle up to $\pi/6$ (below the thick dashed line), implying \autoref{pi6}.
\subsection{Proof of \autoref{critical}}

Let $T$ be the triangle $OAB$ and $K$ be the kite $OBAB'$ (see \autoref{figcrit}). We can assume that $|OA|=1$. Let $\varphi$ be the eigenfunction for $\mu=\mu_2(T)$. It is also the lowest symmetric mode for $K$, and it belongs to either $\mu_2(K)$ or $\mu_3(K)$ (by \autoref{symmetric}). Suppose $\varphi$ has a critical point $p$ on $OA$. It cannot be $0$ there, by \autoref{nodegzero}. Without loss of generality we can assume that $\varphi(p)>0$. If there is more than one critical point take the one with maximal value of $\varphi(p)$. Let
\begin{align*}
  \psi(x,y)=\cos(\sqrt{\mu}y).
\end{align*}
By assumption $\mu y^2\le \mu b^2\le \pi$, hence $\psi_y(x,y)<0$ when $y>0$. Therefore outward normal derivative $\partial_\nu \psi<0$ on the boundary of $K$.  
\begin{figure}[t]
  \begin{center}
\begin{tikzpicture}
  \draw (-2,0) -- (2,0);
  \draw[very thick] (-2,0)  -- (1,2.5) -- (2,0)  -- (1,-2.5) -- cycle;
  \draw (-2,0) node [below] {$O$};
  \draw (2,0) node [below] {$A$};
  \draw (1,2.5) node [right] {$B$};
  \draw (1,-2.5) node [right] {$B'$};
  \fill (0.5,0) circle (1pt) node [below] {$p$};
  \clip (-2,0)  -- (1,2.5) -- (2,0)  -- (1,-2.5) -- cycle;
  \draw[dashed] (0.5,0) .. controls (0.,0.7) and (-1,0.5) .. (-1,1);
  \draw[dashed] (0.5,0) .. controls (0.,-0.7) and (-1,-0.5) .. (-1,-1);
  \draw[dashed] (0.5,0) .. controls (1,0.5) and (0.5,2) .. (2,2);
  \draw[dashed] (0.5,0) .. controls (1,-0.5) and (0.5,-2) .. (2,-2);
  \draw[] (0.3,1) node {$F_1$};
  \draw[] (0.3,-1) node {$F_2$};
\end{tikzpicture}
  \end{center}
  \caption{Four branches from a critical point $p$ inside a kite.}
  \label{figcrit}
\end{figure}
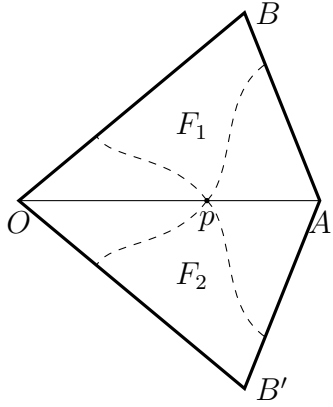

Take $u(x,y)=\varphi(p)\psi(x,y)-\varphi(x,y)$. Then $u(p)=u_x(p)=u_y(p)=0$ (degenerate zero). Hence there are four branches of $\{u=0\}$ around $p$ (by \autoref{branches}), unless $u$ is the eigenfunction, but it does not satisfy Neumann boundary condition. Furthermore $\{u>0\}$ has at least 2 connected components near $p$ and these cannot be globally connected since $\{u=0\}$ has no loops. Therefore there are at least two disjoint nodal domains $F_1$ and $F_2$ of $K$ such that $u\ge 0$ on $\overline{F_i}$. Finally $\partial_\nu u=\partial_\nu \psi<0$ on $\partial K$.

Suppose $u\le 0$ on $OA$, then $\varphi(x,y)\ge \varphi(p)>0$ on $OA$, and the eigenfunction is strictly positive on $OA$. Furthermore all points such that $\varphi(x,y)=\varphi(p)$ are also critical points for the side and $u$ is zero there. We will eliminate the possibility of 2 degenerate zeros later. Note also that if we had two critical points with different value of $\varphi$, then we have taken the larger one as $p$, hence $u$ will be positive somewhere and this case does not apply.

Suppose $u>0$ somewhere on $OA$. Then at least one of $F_i$ must contain a part of $OA$ and it must be symmetric with respect to $OA$. Suppose $F_1$ has this property. Take $G=\{u>0\}\setminus F_1$, then $G$ is also symmetric, since $F_1\cup G$ is symmetric.

Define a symmetric test function $v$
\begin{align*}
  v=u 1_{F_1}-cu 1_{G},
\end{align*}
where $c$ is chosen so that $\int_K v=0$. 

This is a valid test function for $\mu$ (regardless if it equals $\mu_2(K)$ or $\mu_3(K)$). Note that
\begin{align*}
  v\partial_\nu v=(u\partial_\nu u)1_{F_1}+c^2(u\partial_\nu u)1_{F_2}\le 0,\text{ on } \partial(F_1\cup G).
\end{align*}
Indeed, either $u=0$ or $u>0$ and outward normal is negative. Therefore $\mathcal H_\mu[v]\le 0$. But it cannot be equal $0$ since $v$ equals $0$ on an open set, and it cannot be the eigenfunction. This contradicts \autoref{Hcond}.

We already showed that if $u\le 0$ on $OA$ then we have a global minimum, possibly at two or more points. However this means two or more degenerate zeros for $u$. In this case degenerate zeros generate disjoint sets $\{u>0\}$, since there are no loops. Take these sets as $F_1$ and $G$ to get a contradiction. Hence there is only one global minimum.

Finally if $\mu_2(K)$ has symmetric eigenfunction we do not need symmetry of $v$ and we can take any $F_1$ and $F_2$ in its definition. This proves that even if $u\le0$ on $OA$, we still cannot have a critical point. Moreover, since $\mu_2(K)$ has symmetric eigenfunction, this eigenfunction must be $0$ somewhere on the line of symmetry, otherwise we would have at least 3 nodal domains.

\subsection{Proof of \autoref{2sides}}

\begin{figure}[t]
  \begin{center}
\begin{tikzpicture}[scale=2]
  \draw (0,0) node [below] {$A$} -- (2,0.4) node [below] {$B$} -- (0,1) node [above] {$C$} -- cycle; 
  \draw[->,thick] (1,0.2) -- ++(0.4,0.4/5);
  \draw[->,thick] (1,0.2) -- ++(0.4,0);
  \draw[->,thick] (1,0.2) -- ++(0,0.4/5);
  \draw[->,dashed] (1,0.2) -- ++(0.04,-0.2) node [below=-3pt] {\tiny $u_\nu=0$};
\end{tikzpicture}
  \end{center}
  \caption{Acute triangle with one vertical side and no critical points on sloped sides.}
  \label{figts}
\end{figure}
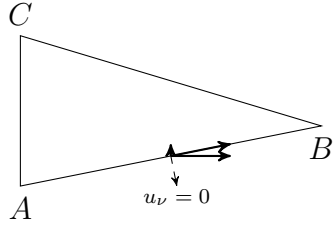

Position the triangle as on \autoref{figts}. Since this triangle is acute, the bottom side is sloped up, and the upper side is sloped down. Let $u$ be the second Neumann eigenfunction of $ABC$. We know that there are no critical points on two sloped sides, and we have Neumann boundary conditions there. Therefore $u_x$ and $u_y$ cannot change sign on these sides and they are never $0$ there. Note also that $u_x=0$ on $AC$, $u_x$ and $u_y$ must have the same signs on $AB$, and opposite signs on $BC$.

If $u_x>0$ (or $u_x<0$) on both $AB$ and $CB$, then $u_x\ge 0$ on the boundary of the triangle. If there was a point $p$ inside $ABC$ such that $u_x(p)=0$, then real analyticity implies that $u_x<0$ at some point near $p$. Therefore $u_x<0$ would have to form an open nonempty subset inside (possibly with a piece of the boundary on $AC$). Hence $u_x=0$ would have a loop, contradicting \autoref{loop}. Therefore $u_x>0$ inside $ABC$. Therefore the global maximum and the global minimum of $u$ must be on the boundary. One of them must be at $B$, the other on $AC$. This is the case for subequilateral triangles.

   If $u_x>0$ on $AB$ and $u_x<0$ on $CB$, then $u_y>0$ on $AB$ and on $CB$ (similar argument for opposite signs). Furthermore $u_y$ satisfies Neumann boundary condition on $AC$, since $(u_y)_x=(u_x)_y=0$ on $AC$. As before $\{u_y<0\}$ would need to form a nonempty subset of $T$, possibly with a part of the boundary on $AC$. But this contradicts \autoref{line2}. Hence $u_y>0$ on $T$. Hence the maximum is at $C$ and the minimum at $A$. This is the case for superequilateral triangles.

As a consequence we obtain
\begin{corollary}[Atar, Burdzy \cite{AB04}]
  hot spots conjecture holds for the lowest symmetric mode of any acute isosceles triangle (note that for superequilateral triangles this is the third eigenfunction). This in turn implies that the conjecture holds for all right triangles.

  Note that for subequilateral triangles and the corresponding right triangles our proof follows closely Miyamoto's proof of the same result.
\end{corollary}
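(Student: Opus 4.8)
The plan is to deduce this corollary from the proof of \autoref{2sides}, run on the half-triangle cut off by the axis of symmetry, using \autoref{critical}, \autoref{kitesym} and \autoref{mubound} to supply the missing input that the two relevant sides carry no critical points. Let $T$ be an acute isosceles triangle, placed with vertical base $A=(0,0)$, $C=(0,h)$ and apex $B=(w,h/2)$, so that the axis of symmetry $\ell$ is the horizontal line $y=h/2$ and the two equal sides $\overline{AB}$, $\overline{CB}$ slope up and down, respectively. Let $\varphi$ be the lowest symmetric mode of $T$; it is invariant under $(x,y)\mapsto(x,h-y)$, so $\varphi_x$ is symmetric and $\varphi_y$ antisymmetric under this reflection. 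The half-triangle $T'=T\cap\{y\le h/2\}$ is a right triangle with the right angle at $M=(0,h/2)$, whose hypotenuse is the equal side $\overline{AB}$; by the correspondence between symmetric Neumann modes of $T$ and Neumann modes of $T'$ (even reflection across $\ell$), the restriction $\varphi|_{T'}$ is the eigenfunction for $\mu_2(T')$.

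The first step is to show that $\varphi$ has no critical point on the equal sides of $T$; by symmetry it suffices to treat $\overline{AB}$, which is the hypotenuse of $T'$. Reposition $T'$ with that hypotenuse on the $x$-axis, rescaled to unit length: its vertices become $(0,0)$, $(1,0)$ and the right-angle vertex $(a,b)$ with $b>0$, which by Thales' theorem lies on the circle $a^2+b^2=a$; after replacing $a$ by $1-a$ if necessary we may assume $0\le a\le 1/2$, and then $a\ne 1/2$, since $a=1/2$ would force $T'$ to be the isosceles right triangle, making $T$ right-angled at $B$. For the kite $K$ obtained by mirroring $T'$ across the $x$-axis, the hypothesis $3b^2\le 1-a+a^2$ of \autoref{kitesym} becomes, after substituting $b^2=a-a^2$, the inequality $(2a-1)^2\ge 0$, which holds strictly; the square and equilateral exceptions cannot occur (the former as $a\ne 1/2$, the latter as $b>0$), so the second Neumann eigenfunction of $K$ is simple and symmetric about the $x$-axis. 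In the same way $b^2=a-a^2\le a^2+(1-a)^2$ because $3a^2-3a+1>0$ for every real $a$, so \autoref{mubound} gives $\mu_2(T')\le\pi^2/b^2$. Hence \autoref{critical} applies to $T'$ and shows that $\varphi|_{T'}$ has no critical point on the hypotenuse; since that side is sloped (neither horizontal nor vertical), $\nabla\varphi=0$ there is equivalent to $\varphi_x=0$ there, so $\varphi_x\ne 0$ along the open side $\overline{AB}$, and by the symmetry of $\varphi_x$ also along $\overline{CB}$.

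Next I re-run the argument in the proof of \autoref{2sides}, with $\overline{AC}$ playing the role of the vertical side and $\overline{AB}$, $\overline{CB}$ of the two sloped sides; their being free of critical points is precisely the hypothesis used there. Two remarks make that argument go through for $\varphi$ even though $\varphi$ need not belong to $\mu_2(T)$ (for superequilateral $T$ it belongs to $\mu_3(T)$). First, the dichotomy there has $\varphi_x$ either of the same sign on both sloped sides or of opposite signs; since $\varphi_x$ is symmetric under $(x,y)\mapsto(x,h-y)$ it has the same sign on $\overline{AB}$ and $\overline{CB}$, so only the first alternative occurs, and that alternative invokes only \autoref{loop}. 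Second, \autoref{loop} is valid whenever the eigenvalue is $\le\mu_3(T)$, which holds for $\varphi$ in both cases. Consequently $\varphi_x\ge 0$ on $\partial T$, hence $\varphi_x>0$ inside $T$ by \autoref{loop} exactly as in the proof of \autoref{2sides}, $\varphi$ is strictly monotone in $x$, and its maximum and minimum over $\overline T$ are attained at $B$ and on $\overline{AC}$, hence on $\partial T$. This proves the hot spots property for the lowest symmetric mode of $T$, which is the third eigenfunction when $T$ is superequilateral.

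For the statement on right triangles, let $R$ be a right triangle. If $R$ is the isosceles right triangle, its second Neumann eigenfunction is, in suitable coordinates, $\cos\pi x-\cos\pi y$, whose extrema are attained at two vertices, so the conclusion is immediate. Otherwise $R$ has an acute angle smaller than $\pi/4$; reflecting $R$ across the leg through that vertex yields an acute isosceles triangle $T$ whose half-triangle is $R$, the hypotenuse of $R$ being one of the equal sides of $T$, and $\varphi|_R$ is then the $\mu_2(R)$-eigenfunction. Since $\max_{\overline T}\varphi$ and $\min_{\overline T}\varphi$ are attained on $\partial T$ and, after possibly applying the reflection of $T$, at points of $\overline R$, and since the leg of reflection lies in the interior of $T$ so that $\partial T\cap\overline R\subseteq\partial R$, the extrema of the $\mu_2(R)$-eigenfunction lie on $\partial R$. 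The point where care is needed is the bookkeeping about which eigenvalue of $T$ the symmetric mode belongs to, together with the verification of the bound $\mu_2(T')\le\pi^2/b^2$ for every acute isosceles $T$; passing to the half-triangle $T'$, where Thales' relation $b^2=a-a^2$ makes all the required inequalities automatic, is what renders this routine, and the isosceles right triangle is the single genuine boundary case that must be checked by hand.
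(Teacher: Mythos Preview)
Your proof is correct and follows essentially the same route as the paper's: show via \autoref{critical} and \autoref{kitesym} that the equal sides of the acute isosceles triangle carry no critical points, then invoke the argument of \autoref{2sides}.

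Two small differences are worth noting. First, for the eigenvalue bound $\mu_2(T')\le\pi^2/b^2$ on the half-triangle (a right triangle with hypotenuse on the $x$-axis), the paper cites an external result (Theorem~3.1 of \cite{LSmaxN}), whereas you obtain it from \autoref{mubound} by substituting Thales' relation $b^2=a-a^2$ and checking $3a^2-3a+1>0$; this is a cleaner, self-contained route. Second, you are more explicit than the paper about why the argument of \autoref{2sides} still applies when the symmetric mode belongs to $\mu_3$ rather than $\mu_2$ (the superequilateral case): since $\varphi_x$ is symmetric under the reflection of $T$, it has the same sign on both equal sides, so only the first alternative in the proof of \autoref{2sides} is relevant, and that alternative uses only \autoref{loop}, which holds for any $\mu\le\mu_3$. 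The paper simply writes ``Now we apply \autoref{2sides}'' without spelling this out. Your separate treatment of the isosceles right triangle, excluded because \autoref{kitesym} has the square as an exception, is also a detail the paper glosses over.
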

\begin{proof}
For right triangles with the longest side of length $1$ and the shortest altitude $b$, Theorem 3.1 from \cite{LSmaxN} gives
\begin{align*}
  \mu_2 b^2\le \frac{4\pi^2 b^2}{3\sqrt{3}A}\le\frac{4\pi^2 b^2}{3\sqrt{3}b^2}=\frac{4\pi^2}{3\sqrt{3}}\le \pi^2.
\end{align*}
Hence we can apply \autoref{critical} to the half of the acute isosceles triangle mirrored along the longest side. We prove below that corresponding kite has symmetric eigenfunction hence there are no critical points on the equal sides of the isosceles triangle. Now we apply \autoref{2sides}.
\end{proof}

\subsection{Proof of \autoref{mubound}}

 Start with the second eigenfunctions for two right isosceles triangles with $(a,b)=(0,1)$ and $(a,b)=(1/2,1/2)$. These are
 \begin{align*}
   \varphi_1(x,y)=\cos(\pi y)-\cos(\pi x),\\
   \varphi_2(x,y)=\cos(\pi x)\cos(\pi y).
 \end{align*}
 The only property of these functions we actually need is that they integrate to $0$ over their respective right isosceles triangles (orthogonal to constants). Now we apply linear transformations to obtain functions on $T(a,b)$ with arbitrary $(a,b)$. Note that we retain orthogonality to constants. Take
 \begin{align*}
   f(x,y)=(1/2-a)\varphi_1(x-ay/b,y/b)-a \varphi_2(x+(1-2a)y/2b,y/2b).
 \end{align*}
 This is a valid test function for $\mu_2(T)$ (it integrates to $0$ over $T(a,b)$). Note that when $a=0$ or $a=1/2$ we recover the exact eigenfunctions for the right isosceles triangles we considered. Let $c=a(a-1)$. We get
 \begin{align*}
   \mu_2(T)\le \frac{\pi^2(2c(2+b^2+c)+b^2+1)-16c(b^2+c)}{2(3c+1)b^2}\stackrel{?}{\le}\frac{\pi^2}{b^2},
 \end{align*}
 where we need to prove the last inequality. Hence we need
 \begin{align*}
  \pi^2(2c(2+b^2+c)+b^2+1)-16c(b^2+c)-2\pi^2(3c+1)\le 0.
 \end{align*}
 Put $d=b^2+c$. Now we need
 \begin{align*}
   0\ge \pi^2(2cd+d+1)-16cd-\pi^2(3c+2)=d(\pi^2(1+2c)-16c)-\pi^2(3c+1)
 \end{align*}
 Note that $0\ge c\ge -1/4$, hence the coefficient in front of $d$ is positive. But
 \begin{align*}
   d=b^2+a^2-a\le a^2+(1-a)^2+a^2-a=3a^2-3a+1=3c+1.
 \end{align*}
 Hence the desired inequality is true if
 \begin{align*}
   0\ge (3c+1)(\pi^2(1+2c)-16c)-\pi^2(3c+1)=(3c+1)c(2\pi^2-16)
 \end{align*}
 But $3c+1>0$, $c\le 0$ and $2\pi^2-16>0$. Hence the inequality is true.

 \section{Kites and \autoref{kitesym}}\label{kitesec}
 Recall that for a triangle $T(a,b)$ we define a kite $K$ by mirroring the triangle with respect to the $x$-axis. We consider the lowest antisymmetric modes and their eigenvalues $\mu_a(K)$. We prove that if $3b^2\le 1-a+a^2$, then the eigenvalues $\mu_a$ are above the second Neumann eigenvalue. This ensures that all eigenfunctions for $\mu_2(K)$ are symmetric. But then they are also eigenfunctions for $T(a,b)$ (with simple eigenvalue). Hence these kites have simple second eigenvalue. This proves \autoref{kitesym}.

 Let $\mu_a$ be the lowest antisymmetric mode on $K$. Then $\mu_a$ is the lowest eigenvalue of the mixed Dirichlet-Neumann problem on $T(a,b)$ with Dirichlet condition on the $x$-axis. We find lower bound for this eigenvalue using unknown trial function method developed in \cite{LSminN} and \cite{LSminD}.

 Then we find an upper bound for $\mu_2(T)\ge \mu_2(K)$ that is smaller than the lower bound from the first step.
  \subsection{Lower bound for $\mu_a$}$\;$
 
  Let $\lambda(a,b)=\mu_a(a,b)$ be the lowest eigenvalue of the mixed problem on $T(a,b)$ with Dirichlet condition on the $x$-axis and Neumann on the other two sides.
  We will use the following unknown trial function lemma.
  \begin{lemma}[Laugeen and Siudeja {\cite[Lemma 4.1]{LSminD}}]\label{unknown}
    Let $\lambda(a,b)$ be the eigenvalue for a triangle with vertices $(\pm1,0)$ and $(a,b)$. The inequality
    \begin{align*}
      \lambda(a,b)\ge C_{a,b,c,d}\lambda(c,d)
    \end{align*}
    is true if
    \begin{align*}
      ( (a-c)^2+d^2)(1-\gamma)+2b(a-c)\delta+b^2\gamma\le d^2/C_{a,b,c,d},
    \end{align*}
    where $\delta$ and $\gamma$ are some numbers (unfortunately unknown) depending only on $a$ and $b$ and satisfying $|\delta|\le 1/2$ and $0\le\gamma\le 1$.
  \end{lemma}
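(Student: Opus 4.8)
The plan is to transplant a mixed eigenfunction of $T(a,b)$ onto $T(c,d)$ by an affine map that preserves the Dirichlet side, insert it into the Rayleigh quotient for $\lambda(c,d)$, and then read off the bound using only the eigenvalue identity on $T(a,b)$ together with two elementary inequalities for the second moments of the gradient. First I would pin down the map: there is a unique affine bijection $\Phi\colon T(a,b)\to T(c,d)$ fixing the base vertices $(\pm1,0)$, and since an affine map fixing two points fixes the whole line through them, $\Phi$ fixes the $x$-axis pointwise, forcing $\Phi(x,y)=\bigl(x+\tfrac{c-a}{b}y,\ \tfrac{d}{b}y\bigr)$ with $\Phi(a,b)=(c,d)$. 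Thus $\Phi$ carries the Dirichlet segment $[-1,1]\times\{0\}$ of $T(a,b)$ exactly onto that of $T(c,d)$, it is bi-Lipschitz, its Jacobian is the constant $d/b$, and the linear part of $\Phi^{-1}$ is $M=\begin{pmatrix}1 & -(c-a)/d\\ 0 & b/d\end{pmatrix}$.

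Next I would set $v=\varphi\circ\Phi^{-1}$ on $T(c,d)$, where $\varphi$ is an eigenfunction for $\lambda(a,b)$ (Dirichlet on the base, Neumann on the slanted sides). Since $v\in H^1(T(c,d))$ with vanishing trace on the Dirichlet side, it is admissible in the variational characterisation of the lowest mixed eigenvalue, so $\lambda(c,d)\le\int_{T(c,d)}|\nabla v|^2\big/\int_{T(c,d)}v^2$. Changing variables back to $T(a,b)$, the Jacobian $d/b$ cancels between numerator and denominator, $\nabla v=M^{\top}\bigl((\nabla\varphi)\circ\Phi^{-1}\bigr)$, and expanding $MM^{\top}$ gives the quadratic form
\begin{align*}
  \lambda(c,d)\ \le\ \frac{1}{\int_{T(a,b)}\varphi^2}\int_{T(a,b)}\Bigl[\bigl(1+\tfrac{(c-a)^2}{d^2}\bigr)\varphi_x^2-\tfrac{2b(c-a)}{d^2}\varphi_x\varphi_y+\tfrac{b^2}{d^2}\varphi_y^2\Bigr].
\end{align*}

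Then I would introduce the ``unknowns'': with $I=\int_{T(a,b)}|\nabla\varphi|^2$, put $\gamma=I^{-1}\int_{T(a,b)}\varphi_y^2\in[0,1]$ and $\delta=I^{-1}\int_{T(a,b)}\varphi_x\varphi_y$, so that $\int\varphi_x^2=(1-\gamma)I$ and, by $2|\varphi_x\varphi_y|\le\varphi_x^2+\varphi_y^2$, $|\delta|\le\tfrac12$; both quantities depend only on $T(a,b)$, hence only on $a$ and $b$, though not in closed form. Substituting and using the eigenvalue identity $I=\lambda(a,b)\int_{T(a,b)}\varphi^2$ turns the last display into
\begin{align*}
  \lambda(c,d)\ \le\ \frac{((a-c)^2+d^2)(1-\gamma)+2b(a-c)\delta+b^2\gamma}{d^2}\,\lambda(a,b),
\end{align*}
the sign flip $-2b(c-a)\delta=2b(a-c)\delta$ being immaterial since only $|\delta|\le\tfrac12$ is ever used. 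The hypothesis of the lemma is exactly that the numerator is $\le d^2/C_{a,b,c,d}$, and rearranging yields $\lambda(a,b)\ge C_{a,b,c,d}\,\lambda(c,d)$.

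I expect the only step requiring genuine care to be the first one: checking that the lowest mixed Dirichlet--Neumann eigenvalue really is the infimum of the Rayleigh quotient over $H^1$ functions with vanishing trace on the Dirichlet side, and that $v=\varphi\circ\Phi^{-1}$ lies in that class. Both are standard once one records that $\Phi$ is bi-Lipschitz and maps one Dirichlet side onto the other, so traces are preserved and no boundary term arises on the slanted sides, where the condition is natural. Everything else is the affine change-of-variables bookkeeping and the two bounds $\gamma\in[0,1]$, $|\delta|\le\tfrac12$, neither of which presents any difficulty.
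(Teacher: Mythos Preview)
Your argument is correct and is precisely the affine transplantation that underlies the ``unknown trial function'' method: pull back the eigenfunction of $T(a,b)$ to $T(c,d)$ by the linear map fixing the base, compute the Rayleigh quotient, and encode the two unknown second moments of $\nabla\varphi$ as $\gamma$ and $\delta$. Note that the present paper does not prove this lemma at all---it is quoted from \cite{LSminD} and merely used---so there is no in-paper proof to compare against; your write-up reconstructs exactly the argument from the cited source.
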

  \begin{remark}
    This lemma relies on linear transformation between triangles. However the result holds for any family of domains that can be obtained using the same linear transformation. In particular, the same is true for triangles $T(a,b)$ (with vertices $(0,0)$, $(1,0)$ and $(a,b)$). Furthermore, this lemma applies to any mixed boundary conditions (see also \cite[Corollary 5.5]{LSminN}).
  \end{remark}
  \begin{remark}
    In order to use this inequality we would need to prove the ``if'' part for any $\gamma$ and $\delta$. Instead, we can choose a few sets of values of $c$ and $d$ so the eigenvalues on the right are explicit, effectively obtaining a few inequalities involving $a, b, \gamma, \delta$. For fixed $a$ and $b$ we need to show that at least one of those inequalities is true for any admissible pair $(\gamma,\delta)$. 
  \end{remark}

  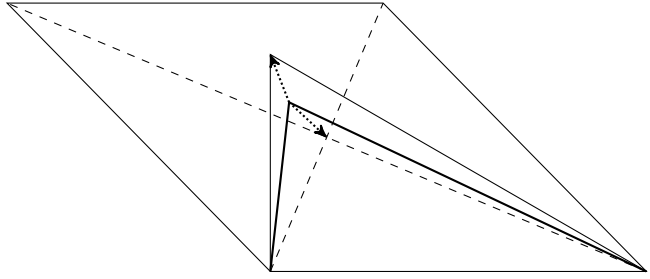
\begin{figure}[t]
    \begin{center}
  \begin{tikzpicture}[scale=5]
    \draw[thick] (1,0) -- (0.05,0.45) coordinate (a)
    -- (0,0);
    \draw[] (1,0) -- (0,{1/sqrt(3)}) coordinate (b) -- (0,0);
    \draw[densely dotted,thick,->] (a) -- (b);
    \draw[densely dotted,thick,->] (a) -- (0.15,{sqrt(0.15*(1-0.15)}) coordinate (d);
    \draw[dashed] (0,0) -- ($(0,0)!2!(d)$) coordinate (dd);
    \draw[] (0,0) -- (1,0) -- (dd) -- ++(-1,0) coordinate (e) -- cycle;
    \draw[dashed] (1,0) -- (e);
  \end{tikzpicture}

    \end{center}
    \caption{A triangle mapped onto a half-equilateral triangle or a quarter of a rhombus}
    \label{figrh}
  \end{figure}

  We consider two pairs $(c,d)$.
  \begin{enumerate}
    \item $(c,d)=(0,1/\sqrt{3})$ (half-equilateral triangle on \autoref{figrh}). On this triangle $\lambda(0,1/\sqrt{3})=4\pi^2/3$ (the lowest Neumann eigenvalue of the equilateral triangle with side length 4/3, see e.g. McCartin \cite{McC02}).
    \item $(c,d)=(c,\sqrt{c-c^2}=:h)$, where $a\le c\le 1/2$ will be chosen later. Here we get $\lambda(c,d)=\lambda_1(R)$ (the first Dirichlet eigenvalue of the rhombus, see \autoref{figrh}). We can use Hooker-Protter bound \cite{HP61} to get 
      \begin{align*}
	\lambda(c,h)\ge \frac{\pi^2(1+2h)}{4h^2}.
      \end{align*}
      See also \cite{FS10} for comparisons of known bounds for rhombi, showing that Hooker-Protter bound is the best for relatively square rhombi.
  \end{enumerate}
   
  Suppose we want to prove
  \begin{align*}
    \mu_a( (a,b))=\lambda(a,b)\ge \frac{4\pi^2}{3F},
  \end{align*}
  for some, not yet known $F=F(a,b)$.
  \begin{enumerate}
    \item 
      First consider $c=0$ and $d=1/\sqrt{3}$. To get the bound we want we need
      \begin{align}\label{transeq}
      (3a^2+1)(1-\gamma)+6ab\delta+3b^2\gamma\stackrel{?}{\le} F,
    \end{align}

  \item Using $d=h=\sqrt{c-c^2}$ we need
    \begin{align}\label{transrhombus}
      ( (a-c)^2+h^2)(1-\gamma)+2b(a-c)\delta+b^2\gamma\stackrel{?}{\le} \frac{3(1+2h)F}{16}
    \end{align}
  \end{enumerate}
  We need to show that at least one of the inequalities (\ref{transeq},\ref{transrhombus}) is true. We can achieve that by proving that one positive linear combination of those inequalities is true. We can choose this linear combination so that $\delta$ cancel.

  Therefore we combine $c-a$ times \eqref{transeq} and $3a$ times \eqref{transrhombus} and we need to prove
  \begin{align}\label{combined}
    (3ca(1-a)+c-a)(1-\gamma)+3b^2 c \gamma
    \stackrel{?}{\le}
    \left(c-a+\frac{9a(1+2h)}{16}\right)F.
  \end{align}
  This inequality must be true for any $0\le \gamma\le 1$. To simplify the task we may choose $c$ so that the expressions in front of $1-\gamma$ and $\gamma$ are equal, effectively eliminating $\gamma$. That is
  \begin{align*}
    c=\frac{a}{1-3\delta},
  \end{align*}
  with $\delta=a^2+b^2-a\ge0$. Note that $c=a$ for all right triangles ($\delta=0$), hence $T(c,h)=T(a,b)$ for right triangles and we are using the Hooker-Protter bound for all right triangles. This gives 
  \begin{align*}
    \frac{3b^2 a}{1-3\delta}\le a\left( \frac{3\delta}{1-3\delta} +\frac{9(1+2h)}{16}\right)F
  \end{align*}
  We can treat the equality case of this inequality as the definition of $F$, effectively forcing \eqref{combined} to be true.
  Hence we can take
  \begin{align*}
    \frac{1}{F}=
    \frac{3+7\delta+6\sqrt{a(1-3\delta-a)})}{16b^2}
  \end{align*}
  But \eqref{combined} is true, therefore we proved that
  \begin{align*}
    \mu_a(a,b)\ge \frac{\pi^2\left(3+7\delta+6\sqrt{a(1-3\delta-a)}\right)}{12b^2}
  \end{align*}
  The assumption $3b^2<1-a+a^2$ in \autoref{kitesym} is equivalent to $1-3\delta-a<-4a^2+3a$. Under this assumption the above bound simplifies to
  \begin{align*}
    \mu_a(a,b)\ge \frac{\pi^2\left(3+7\delta+6a\sqrt{3-4a}\right)}{12b^2}
  \end{align*}
  
  Note that the $c$ we choose above for $3b^2=1-a+a^2$ satisfies $c=\frac{1}{4(1-a)}>\frac14$. Hence even for $a\approx 0$ (near equilateral) we are using rhombi that are not far from square, hence the Hooker-Protter bound is the most accurate according to \cite[Figure 12]{FS10} (in the notation of that paper we are dealing with rhombi with $a\ge \sqrt{3}/3$). Obviously for smaller values of $b$ we are using much smaller values of $c$, but these cases are far from critical in our proof.

  \subsection{General variational upper bound approach}\label{general}
  Variational upper bounds involving linear combinations of any number of transplanted exact eigenfunctions always have the following form
  \begin{align}\label{form}
    \mu_2(a,b)\le \frac{A(a)+B(a)b^2}{C(a)b^2},
  \end{align}
  where $A(a)$, $B(a)$ and $C(a)$ are polynomials. One way to show that $\mu_a> \mu_2$ is to first show that $C(a)>0$ for $0\le a\le 1/2$. Then prove that 
  \begin{align*}
  12A(a)+12B(a)b^2\le C(a)\pi^2(3+7a^2+7b^2-7a+6a\sqrt{3-4a}).
  \end{align*}
  This is however equivalent to
  \begin{align}\label{muamus}
  12A(a)+(12B(a)-7\pi^2C(a))b^2\le C(a)\pi^2(3+7a^2-7a+6a\sqrt{3-4a}).
  \end{align}
  Next we show that the polynomial in front of $b^2$ is positive and we get that the left hand side is increasing with $b$, while the right hand side is decreasing. Therefore we can put any upper bound for $b$ involving $a$ and we get an inequality for $a$. This inequality will have one square root, but it can be transformed into a high order polynomial inequality, and we need to prove it for all $0\le a\le1/2$.

  Note that we do not need a sharp inequality in the above inequalities, as long as we find upper bound that is not sharp for any $T(a,b)$ (except for known cases with double eigenvalue, square and equilateral).
  \subsection{Upper bound for $\mu_2(T)$ and the proof of \autoref{kitesym}}
We will take a linear combination of 3 eigenfunctions, 1 from a half-equilateral triangle and 2 from a right isosceles triangle. To be more precise we need the second eigenfunction on $T(0,1/\sqrt{3})$ and the first two nonconstant eigenfunctions from $T(1/2,1/2)$. Take
\begin{align*}
  \varphi(x,y)&=(2a-1)\cos\left( \frac{2\pi y}{3b} \right)\left(1-2\cos\left( \frac{\pi(bx-ay)}{b} \right)\right)+
\\&+4a\cos\left(\frac{\pi y}{2b} \right)\cos\left( \frac{\pi(2bx+(1-2a)y)}{2b} \right)+
  \\&+2a(2a-1)\cos\left( \frac{\pi(bx+(1-a)y)}{b} \right)\cos\left( \frac{\pi(bx-ay)}{b} \right).
  \end{align*}
  Note that only the first term is present for $a=0$, in  particular for $T(0,1/\sqrt{3})$. On the other hand only the middle term is nonzero for $T(1/2,1/2)$. Therefore we recover exact eigenfunctions for these special cases. We will not need this fact, nor that we used eigenfunction in our test function.
  We only need to know that this function integrates to $0$ over $T(a,b)$, hence it is a good test function for $\mu_2(T)$. This gives an upper bound in the form \eqref{form} with
  \begin{align*}
    C(a)&=67200\pi^2a^4+12(74976-5600\pi^2)a^3+12(15400\pi^2-182346)a^2+
    \\&\phantom{+67200\pi^2a^4}+12(72429-8400\pi^2)a+25200\pi^2
    \\&\ge
12(15400\pi^2-182346)a^2+12(72429-8400\pi^2)a+25200\pi^2,
  \end{align*}
  Coefficients for $a^2$ and $a$ are negative, hence we can put $a=1/2$ and we get $C(a)>0$ for $0\le a\le 1/2$.

  We also have
  \begin{align*}
    B(a)&=\pi^2\Big( 134400\pi^2a^4+(650880-134400\pi^2)a^3+(168000\pi^2-1867740)a^2 +
        \\&\qquad\quad+ (784800-67200\pi^2)a+(127575+16800\pi^2)\Big),\\
    A(a)&=\pi^2\Big( 134400 \pi^2 a^6+(650880-268800 \pi^2)a^5+(369600 \pi^2-2518620)a^4+
    \\&\qquad\quad+(2105752-235200 \pi^2)a^3+(89600 \pi^2-924757)a^2+
    \\&\qquad\quad+(227938-22400 \pi^2)a+(5600\pi ^2-42525)\Big).
  \end{align*}
  We need to show that the following polynomial is positive
  \begin{align*}
    \frac{12B(a)-7\pi^2C(a)}{12\pi^2}&=95200\pi^2 a^4+(126048-95200 \pi^2)a^3+(60200\pi^2-591318)a^2+
    \\&\qquad\quad+(277797-8400\pi^2)a+(2100\pi^2+127575)
  \end{align*}
  Note that only the coefficient for $a^3$ is negative, and replacing $a^3$ with $1/8$ still gives positive constant coefficient. Hence we proved that $12B-7\pi^2 C>0$.
  Therefore in \eqref{muamus} we can replace $b$ with its maximal value $\sqrt{(1-a+a^2)/3}$ and we need to prove that
  \begin{align}
    P(a)a&\le 18Q(a)a\sqrt{3-4a},\label{sqrtineq}\\
    Q(a)&=5600 \pi^2a^4+(74976-5600 \pi^2)a^3+2(7700\pi^2-91173)a^2+\nonumber\\
    &\qquad\quad+(72429-8400 \pi^2)a+2100 \pi^2,\nonumber\\
    P(a)&=380800\pi^2a^5+(504192-761600 \pi ^2)a^4+(868000 \pi^2-2869464) a^3-\nonumber\\
    &\qquad\quad-480(665 \pi^2-2682)a^2-8(44211+2450 \pi^2)a+525(347+80 \pi^2)\nonumber
  \end{align}
  Again we note that coefficients of $a^4$ and $a^3$ in $Q$ are positive, hence we can disregard these terms. While coefficients of $a^2$ and $a$ in $Q$ are negative, hence we can put $a=1/2$ and we get $Q(a)>0$.
  Therefore it is enough to show 
  \begin{align}\label{poly}
    P^2(a)-18^2Q^2(a)(3-4a)\le 0.
  \end{align}
  Note that for $T(0,1/\sqrt{3})$ and $T(1/2,1/2)$ we have exact eigenvalues. Hence we can expect that $a=0$ and $a=1/2$ are roots of the above polynomial. In fact we already eliminated $a=0$ in \eqref{sqrtineq}. However $a=1/2$ is a double root of \eqref{poly} and we can reduce the degree by 2. Numerical results suggest that the inequality is roughly true for $a\in[-0.49,0.52]$ ($(0,1/2)$ is needed). To avoid closeness of the positive root substitute $a\to1/2-a$. We still need to prove the new inequality for $a\in[0,1/2]$ and it is numerically true up to $a\approx 1$. We are left with $8$-degree polynomial inequality
  \begin{align*}
   0\ge &36252160000 \pi^4 a^8+(-95998156800 \pi^2-46412800000 \pi ^4)a^7+\\
   &+(63552393216-34277644800 \pi^2+83354880000\pi ^4)a^6+\\
   &+(-1352162962944+582294182400\pi^2-149461760000 \pi^4)a^5+\\
   &+(-3554482258800-300435206400 \pi^2+121433760000 \pi^4)a^4+\\
   &+(-1682712947520+1404232972800 \pi^2-139740160000 \pi^4)a^3+\\
   &+(4864275678312-1107844970400 \pi^2+70309120000 \pi^4)a^2+\\
   &+(-1418249685780+311172170400 \pi^2-20603520000 \pi^4)a+\\
   &+(3669120000 \pi^4-36985183200 \pi^2)
  \end{align*}
  We will reduce this polynomial to a negative constant by increasing it and simplifying at the same time. We apply the following steps
  \begin{enumerate}
    \item Coefficient for $a^8$ is positive, hence we can replace $a^8$ with $a^7/2$. Similarly for $a^6$.
    \item Coefficient for $a^4$ is positive, hence we can replace $a^4$ with $a^3\frac{\frac27+\frac72 a^2}{2}\ge a^4$. Here simple linear estimate is too rough yielding false inequality.
    \item Similarly replace $a^2$ with $a\frac{\frac12+2a^2}{2}\ge a^2$.
    \item New coefficients for $a^7$ and $a^5$ are still negative, hence we can replace $a^7$ and $a^5$ with $0$.
    \item New coefficient for $a^3$ is positive, hence we can replace $a^3$ with $a/4$. 
  \end{enumerate}
  After all these steps we get a linear function with negative coefficients, proving the inequality is true and it is strict.

  Therefore we get strict inequality $\mu_a>\mu_2$ for all cases except $T(0,1/\sqrt{3})$ and $T(1/2,1/2)$. These triangles have double eigenvalues.
  \section{Simplicity for $\mu_2$}
Note that Ba\~nuelos and Burdzy \cite{BB99} proved that if a kite is narrow enough, then $\mu_2$ of the kite is simple (Proposition 2.4(ii)) and the second eigenfunction is symmetric (Proposition 2.3). Hence the second eigenvalue is also simple for the corresponding triangle. Later, Atar and Burdzy \cite{AB04} proved simplicity for obtuse and right triangles. 

We prove simplicity of $\mu_2$ for all acute triangles (\autoref{simple}). Our approach is to show that $\mu_2+\mu_3>2\mu_2$ by finding appropriate bounds for both sides, similarly to the approach from \autoref{kitesec}. Lower bound for the left side can again be obtained using the unknown trial function method developed in \cite{LSminN,LSminD}. In particular we can use \autoref{unknown}. 

In this section we will be working with triangles $T(a,b)$ with vertices $(-1,0)$, $(1,0)$ and $(a,b)$. Acute triangles can be described using the following conditions (see \autoref{fig:split}): $a\ge 0$, $a^2+b^2\ge1$ (acute), $(a+1)^2+b^2\le 4$ (the longest side on the $x$-axis). The appropriate version of \autoref{unknown} states that
    \begin{align*}
      \mu_2(a,b)+\mu_3(a,b)\ge C_{a,b,c,d}(\mu_2(c,d)+\mu_3(c,d))
    \end{align*}
    is true if
    \begin{align*}
      ( (a-c)^2+d^2)(1-\gamma)+2b(a-c)\delta+b^2\gamma\le d^2/C_{a,b,c,d},
    \end{align*}
    where $\delta$ and $\gamma$ are some numbers (unfortunately unknown) depending only on $a$ and $b$ and satisfying $|\delta|\le 1/2$ and $0\le\gamma\le 1$.
 
\begin{figure}[t]
  \begin{center}
\begin{tikzpicture}[xscale=4,yscale=4]
  \fill (0,{sqrt(3)}) circle (0.02) node [left] {\tiny $(0,\sqrt{3})$}; 
  \fill (0,1) circle (0.02) node [below left=-2pt] {\tiny $(0,1)$};
  \fill (1,0) circle (0.02) node [below=-2pt] {\tiny $(1,0)$};
  \fill (-1,0) circle (0.02) node [below=-2pt] {\tiny $(-1,0)$};
  \fill (1/2,{sqrt(3)/2}) circle (0.02) node [below left=-2pt] {\tiny $(1/2,\sqrt{3}/2)$};
  \fill (1,{2/sqrt(3)}) circle (0.02) node [right] {\tiny $(1,2/\sqrt{3})$};
  \draw[loosely dashed] (-1,0) -- (0,{sqrt(3)}) -- (1,0) -- cycle; 
  \draw[loosely dashed] (-1,0) -- (0,1) -- (1,0); 
  \draw[loosely dashed] (-1,0) -- (1,{2/sqrt(3)}) -- (1,0); 
  \draw[<->] (0,2) |- (1.2,0);
  \draw[fill, fill opacity=0.1] (1,0) arc (0:90:1) -- (0,{sqrt(3)}) arc (60:0:2);
  \draw (0,{sqrt(3)}) node [rotate=-40,above right] {\tiny $(a+1)^2+b^2=4$}; 
  \draw[dotted] (1/2,0) -- (1/2,1.39);
  \draw[dotted] (0,1.08) node [left=-2pt] {\tiny $1.08$} -- (0.8,1.08); 
  \clip (1,0) arc (0:90:1) -- (0,{sqrt(3)}) arc (60:0:2);
  \draw[thick] (1/2,0) -- (1/2,1.5);
  \draw[thick] (1/2,1.08) -- (0.8,1.08);
\end{tikzpicture}
  \end{center}
  \caption{Possible values of $(a,b)$ and the split into nearly equilateral, nearly degenerate and a small triangular middle area. Triangles with known eigenvalues are drawn with dashed lines with dots at vertices.}
  \label{fig:split}
\end{figure}
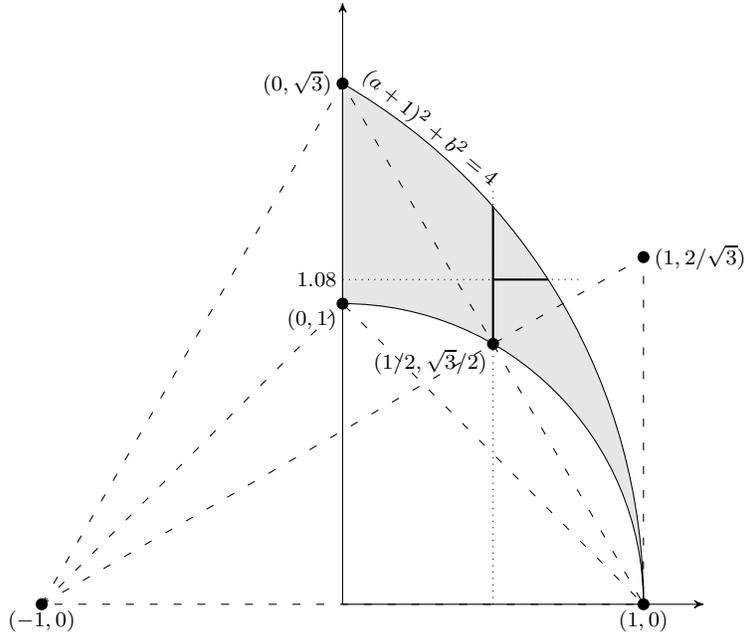

    Our goal is to prove the following chain of inequalities:
 \begin{align*}
   \mu_2(a,b)+\mu_3(a,b)\ge C(\mu_2(c,d)+\mu_3(c,d))=2U(a,b)>2\mu_2(a,b).
 \end{align*}
 We can choose any upper bound $U$ (that does not give equality for non-equilateral triangles). This leads to
 \begin{align}\label{abcdU}
   ( (a-c)^2+d^2)(1-\gamma)+2b(a-c)\delta+b^2\gamma\le d^2 \frac{\mu_2(c,d)+\mu_3(c,d)}{2U(a,b)}.
 \end{align}
 Note that we can choose at least 4 sets values of $c$ and $d$ so the eigenvalues on the right are explicit, effectively obtaining 4 inequalities involving $a, b, \gamma, \delta$. For fixed $a$ and $b$ we need to show that at least one of those inequalities is true for any admissible pair $(\gamma,\delta)$.

We will consider two upper bounds
\begin{align*}
  U_1(a,b)&=\frac{64\pi^2(a^2+b^2+3)+243(a^2+b^2-6a-3)}{288b^2},\\
  U_2(a,b)&=\frac{18}{a^2+3}
\end{align*}
The first bound can be obtained by transplanting the lowest symmetric mode of the equilateral triangle onto the triangle with vertex $(a,b)$ (compare with \autoref{general}). The second bound can be obtained by using linear function $x+a/3$ (that integrates to $0$) as a test function and it will be used for nearly degenerate triangles. The first bound gives exact eigenvalue only for equilateral triangle, the second is never accurate. Hence we can use these bounds.

Finally we choose the following pairs $(c,d)$ with explicit eigenvalues and get four special cases of \eqref{abcdU}.
\begin{itemize}
  \item $(c,d)=(0,\sqrt{3})$ (equilateral triangle)
    \begin{align}\label{ineq1}
   (a^2+3)(1-\gamma)+2ab\delta+b^2\gamma\le \frac{4\pi^2}{3U_1(a,b)}.
    \end{align}
  \item $(c,d)=(0,1)$ (right isosceles triangle)
    \begin{align}\label{ineq2}
   (a^2+1)(1-\gamma)+2ab\delta+b^2\gamma\le \frac{3\pi^2}{4U_1(a,b)}.
    \end{align}
  \item $(c,d)=(1/2,\sqrt{3}/2)$ (half of equilateral triangle - longest side on the $x$-axis)
    \begin{align}\label{ineq3}
   ( (a-1/2)^2+3/4)(1-\gamma)+(2a-1)b\delta+b^2\gamma\le\frac{2\pi^2}{3U_?(a,b)}.
    \end{align}
    Note that here we will use either $U_1$ or $U_2$, depending on the case we consider.
  \item $(c,d)=(1,2/\sqrt{3})$ (half of equilateral triangle - middle side on the $x$-axis)
    \begin{align}\label{ineq4}
   ( (a-1)^2+4/3)(1-\gamma)+2(a-1)b\delta+b^2\gamma\le\frac{8\pi^2}{9U_2(a,b)}.
    \end{align}
\end{itemize}
For example we will use the first 3 cases on nearly equilateral triangles and the last 2 on nearly degenerate. See \autoref{fig:split}.

\subsection{Nearly degenerate triangles}

We need to show that either \eqref{ineq3} or \eqref{ineq4} (both with $U_2$) is true for any $\delta$ and $\gamma$. Instead we can show that a linear combination of these inequalities with positive coefficient holds. We can choose coefficients so that $\delta$ cancels. That is $(1-a)$\eqref{ineq3}+$(a-1/2)$\eqref{ineq4}
\begin{align*}
  \frac{\gamma}{6}(3a^2+3b^2-8a+1)+\frac{1}{6}(8a-3a^2-1)-\frac{1}{81}\pi^2 (a+1)(a^2+3)\le0.
\end{align*}
Left side is linear in $\gamma$, hence it is enough to consider extremal values $\gamma=0$ and $\gamma=1$. When $\gamma=0$ we get a cubic in $a$ and we can directly check that inequality holds for $a>-6.4$. When $\gamma=1$ we get only negative coefficients for powers of $a$. Therefore we can put $a=1/2$ and the inequality is true when $|b|<1.089$. This last inequality defines the boundary of the ``nearly degenerate area''. In fact we take $1.08$ as the boundary point.

We proved that regardless of the value of $\delta$ and for any $0\le\gamma\le 1$, we get that either \eqref{ineq3} or \eqref{ineq4} is true. Hence nearly degenerate triangles have simple second eigenvalue.

\subsection{Algorithm for proving polynomial inequalities}
Other cases involve polynomial inequalities of high degree in two variables over regions. In \cite{S10} the author developed an algorithm for proving such inequalities on rectangles with lower left vertex $(0,0)$. By splitting a domain into finitely many (usually very few) rectangles one can handle more complicated domains. It is also sometimes necessary to split rectangles into smaller rectangles. In particular any equality point must be at $(0,0)$.

\lstloadlanguages{Mathematica}
\lstset{
	language=Mathematica,
	showstringspaces=false,
	breaklines,
	basicstyle={\small},
	numbers=left,
	numberstyle={\tiny},
	morekeywords={Cross,TrigReduce,AffineTransform,Del,Total,FullSimplify,RegionPlot,NotebookDirectory,Alpha},
	deletekeywords={N},
	stringstyle={\textit},
	escapeinside={*@}{@*}
}

The following Mathematica code either proves that $P(x,y)\le 0$ on $(0,dx)\times(0,dy)$ (it returns True), or fails (returns False). Note that a negative result does not mean that the inequality is false, rather that one needs to divide the rectangle into smaller rectangles. 

\begin{center}
  \begin{lstlisting}[numbers=none,captionpos=b,caption=Algorithm for proving polynomial inequalities,frame=lines]
CumFun[f_,l_]:=Rest[FoldList[f,0,l]];
PolyNeg[P_,{x_,y_},{dx_,dy_}]:=
  ((Fold[CumFun[Min[#1,0]/dy+#2&,Map[Max[#1,0]&,#1]dx+#2]&,0,
    Reverse[CoefficientList[P,{x,y}]]]//Max)<=0);
\end{lstlisting}
\end{center}

For the full description of the algorithm see Section 5 in \cite{S10}. 

\subsection{Middle area}
Here we treat triangles with $1/2\le a\le 7/10$, $1.08\le b\le \sqrt{7}/2$ and $(a+1)^2+b^2\le 4$ (small triangular area on \autoref{fig:split}). Note that here $a<b$. In this part we will only use \eqref{ineq1} and \eqref{ineq3}, both with upper bound $U_1$. Note that the coefficient for $\delta$ is positive in both cases, hence we only need to prove our inequalities for $\delta=1/2$. Therefore we need
\begin{align}
  (a^2+3)(1-\gamma)+ab+b^2\gamma\le \frac{4\pi^2}{3U_1(a,b)},\label{ineq1g}\\
  (a^2-a+1)(1-\gamma)+(a-1/2)b+b^2\gamma\le\frac{2\pi^2}{3U_1(a,b)}.\label{ineq3g}
\end{align}

The coefficient for $\gamma$ in \eqref{ineq3g} equals $-a^2+a+b^2-1\ge -a^2+a+1.08^2-1>0$. Therefore the left side is increasing with $\gamma$. Unfortunately $\gamma=1$ gives false inequality for some $(a,b)$ pairs. Take $\gamma=2/3$ to get the following inequality for $a$ and $b$ (note that $U_1>0$ as an upper bound for a positive eigenvalue)
\begin{align}\label{poly1}
 (64\pi^2(a^2+b^2+3)+243(a^2+b^2-6a-3))(a^2-a+1+3ab-3b/2+2b^2)- 2\pi^2 288b^2 \le 0.
\end{align}
Substitute $\sqrt{7}/2-b$ for $b$ and $a+1/2$ for $a$. The region of interest transforms into
\begin{align}\label{region1}
  \begin{cases}
  0\le b\le \sqrt{7}/2-1.08\le 1/4,\\
  0\le a,\\
  (a+3/2)^2+(\sqrt{7}/2-b)^2\le 4.
  \end{cases}
\end{align}
This region can be covered with 2 rectangles:
\begin{itemize}
  \item $0\le a\le 99/1000$ and $0\le b\le 1/4$: We run the algorithm with $b$ as the first variable $x$, and $a$ as the second variable $y$, on rectangle with sides $dx=1/4$ and $dy=99/1000$. Using Mathematica code from previous paragraph we run: \lstinline!PolyNeg[P,{b,a},{1/4,99/1000}].! We get that inequality \eqref{poly1} (written as $P(b,a)\le 0$) is true. Note that in this case we could also perform the steps of the algorithm by hand, since the polynomial is not overly complicated. However, in other cases we will have more complicated polynomials, for which calculations by hand would not be practical. 
  \item $99/1000\le a \le 199/1000$ and $1/9\le b\le 1/9+15/100$: We make a substitution $b\to b+1/9$ and $a\to a+99/1000$ (to put the lower left vertex at $(0,0)$) and we run the algorithm again. The inequality \eqref{poly1} is again true.
\end{itemize}
Therefore for any $|\delta|\le 1/2$ and $0\le \gamma\le 2/3$ inequality \eqref{ineq3} holds.

We will use the above procedure many times. Each time we shift a rectangle so that the lower left vertex is $(0,0)$ and we choose appropriate side lengths $dx$ and $dy$. Then we run the algorithm \lstinline!PolyNeg[P,{b,a},{dx,dy}]! to check that inequality $P(b,a)\le 0$ is true.

To finish the middle region we will show that when $2/3\le \gamma\le 1$ then either \eqref{ineq1g} or \eqref{ineq3g} is true. The claim follows if a linear combination (with positive coefficients) of the inequalities is true. Take \eqref{ineq3g}$+(b-a)$\eqref{ineq1g} and look at coefficient for $\gamma$
\begin{align*}
  b^2+a&-a^2-1+(b-a)(b^2-a^2-3)
\end{align*}
Substitute $a\to a+1/2$ and $b\to \sqrt{7}/2-b$ and run the algorithm with $dx=dy=1/4$ to check that this coefficient is nonpositive. Therefore we can take $\gamma=2/3$ in the linear combination \eqref{ineq3g}$+(b-a)$\eqref{ineq1g} and we need
\begin{align*}
  (a^2-a+1+2b^2+3ab-3/2b)+(b-a)(a^2+3+2b^2+3ab)\le\frac{2\pi^2}{U_1(a,b)}(2b-2a+1).
\end{align*}
Multiply by $U_1$, substitute $a\to a+1/2$ and $b\to \sqrt{7}/2-b$ and rearrange to the form $P(b,a)\le 0$. The result is a fifth degree polynomial in $a$ and $b$ with complicated, but explicit coefficients. This time we cover region \eqref{region1} with 3 rectangles
\begin{itemize}
  \item $0\le a\le 5/100$, $0\le b\le 1/4$,
   \item $5/100\le a\le 14/100$, $5/100\le b\le 1/4$,
   \item $14/100\le a \le 20/100$, $17/100\le b\le 1/4$.
\end{itemize}
On each rectangle (shifted so that $(0,0)$ is the lower left corner) we prove the required inequality with one application of the algorithm. Therefore for $2/3\le \gamma\le 1$ either \eqref{ineq1g} or \eqref{ineq3g} is true. Hence triangles corresponding to the middle area have simple second Neumann eigenvalue.

\subsection{Nearly equilateral triangles}
Now we treat triangles with $a\le 1/2$. In this region we need to use 3 inequalities (\eqref{ineq1},\eqref{ineq2},\eqref{ineq3}), and there is no easy way to discard $\delta$ (as before). But inequalities \eqref{ineq1}, \eqref{ineq2}, \eqref{ineq3} (all with $U_1$), are linear in both $\delta$ and $\gamma$. Hence each one is true on a halfspace given by some line. We need to make sure that the square $|\delta|\le 1/2$ and $0\le\gamma\le 1$ is fully covered by these halfspaces. 

We first show that the boundary is covered. Next we need to eliminate the possibility that the three lines (defining the halfspaces) form a triangle inside the square, and all inequalities are true outside of this triangle. All we need to do is check if one of the inequalities is true in one point of the triangle, in particular in the intersection of the other two lines.

\subsubsection{Case $\gamma=1$}
Take only the first inequality \eqref{ineq1} and put $\delta=1/2$ (has positive coefficient). We get
\begin{align*}
 3U_1(a,b)(ab+b^2)\le 4\pi^2.
\end{align*}
Hence it is enough to show that
\begin{align*}
 (64\pi^2(a^2+b^2+3)+243(a^2+b^2-6a-3))(a+b)-384b\pi^2\le 0
\end{align*}
Note that without loss of generality we can add anything to the left side, as long as the expression is nonnegative inside of the region (we make the inequality harder to prove). It can however be negative outside improving our margin of error near the boundary. Therefore we elect to prove
\begin{align*}
  (64\pi^2(a^2+b^2+3)+243(a^2+b^2-6a-3))&(a+b)-384b\pi^2+
  \\&+2000(4-(a+1)^2-b^2)(b-\sqrt{3}/2)\le 0 
\end{align*}
The added expression is negative just below the lowest point of the parameter set $(1/2,\sqrt{3}/2)$, and above the set $(a+1)^2+b^2\le 4$ (see \autoref{fig:split}). The latter property allows us to use rectangles near the equilateral triangle, since the new inequality is true on the line $b=\sqrt{3}$. This line contains equilateral triangle $a=0$, but no other admissible pair $(a,b)$. However for $b$ slightly less that $\sqrt{3}$ there is an interval $(0,a_b)$ with admissible pairs. The added expression allows us to consider only rectangular regions instead of the curved upper boundary of the parameter space on \autoref{fig:split}.

We can expect that for equilateral triangle there is equality, hence $b=\sqrt{3}$ must be mapped to the lowest corner of a rectangle. Therefore substitute $b\to\sqrt{3}-b$. Now take rectangles
\begin{itemize}
  \item $(b,a)\in [0,2/3]\times[0,1/2]$
  \item $(b,a)\in [2/3,1]\times[0,1/2]$ (here we need to shift $b$ for the algorithm)
\end{itemize}
Our algorithm proves the inequality on both rectangles, hence the case $\gamma=1$ is proved.
\subsubsection{Case $\gamma=0$}
This time we combine \eqref{ineq2} and \eqref{ineq3} so that $\delta$ disappears. That is we take $a$\eqref{ineq3}$+(1/2-a)$\eqref{ineq2}
\begin{align*}
  a(a^2-a+1)+(1/2-a)(a^2+1)\le \frac{\pi^2}{U_1(a,b)} (3(1/2-a)/4+2a/3)
\end{align*}

Now we multiply by $U_1(a,b)$, rearrange and prove the inequality using rectangles
\begin{itemize}
  \item $(b,a)\in [97/100,177/100]\times[0,1/2]$ (here we only shift $b$),
  \item $(b,a)\in [\sqrt{3/2},\sqrt{3}/2+1/5]\times[1/5,1/2]$ (here we need to shift both $a$ and $b$)
\end{itemize}
\subsubsection{$\delta=1/2$}
Here again we can work with just one inequality at the time. Start with the third one \eqref{ineq3}. As in the middle area case we note that the coefficient for $\gamma$ is positive and we use the same value $\gamma=2/3$ as in this earlier case. 
\begin{align*}
   (a^2-a+1)+3(a-1/2)b+2b^2\le\frac{2\pi^2}{U_1(a,b)}.
\end{align*}
As in the case $\gamma=1$, we can add something positive to our inequality (and negative outside the region of interest). This leads to inequality
\begin{align*}
  (64\pi^2(a^2+b^2+3)+243(a^2+b^2-6a-3))&(a^2+3ab+2b^2-a-3b/2+1)-576b^2\pi^2+
  \\&+10^4(4-(a+1)^2-b^2)(b-\sqrt{3}/2)\le 0.
\end{align*}
Again, as in the case $\gamma=1$ we expect equality for equilateral triangle, hence we substitute $b\to \sqrt{3}-b$. We prove the inequality using 3 rectangles
\begin{itemize}
  \item $(b,a)\in [0,0.14]\times[0,0.25]$
  \item $(b,a)\in [0.14,0.45]\times[0,0.5]$ (here we need to shift $b$)
  \item $(b,a)\in [0.45,0.9]\times[0,0.5]$ (we shift $b$ again).
\end{itemize}
Each time we get true inequality, hence for $\gamma\le 2/3$ inequality \eqref{ineq3} is true. For larger $\gamma$ we take just the first inequality \eqref{ineq1}. We get negative coefficient for $\gamma$ hence we choose $\gamma=2/3$. The procedure is exactly as for small $\gamma$. We rearrange and add positive term $7000(4-(a+1)^2-b^2)(b-1)^2$. Finally we use 4 rectangles to prove the inequality
\begin{itemize}
  \item $(b,a)\in [0,0.23]\times[0,0.33]$
  \item $(b,a)\in [0.23,0.66]\times[0,0.5]$ (here we need to shift $b$)
  \item $(b,a)\in [0.66,0.81]\times[0,0.5]$ (we shift $b$ again).
  \item $(b,a)\in [0.81,0.89]\times[1/3,0.5]$ (here we shift both variables)
\end{itemize}
Hence regardless of the value of $\gamma$ for $\delta=1/2$ at least one inequality holds.

\subsubsection{$\delta=-1/2$}
In the last piece of the boundary we combine the first 2 inequalities \eqref{ineq1} and \eqref{ineq2}. Since $a+b-1\ge 0$, we can take $(a+b-1)$\eqref{ineq1}$+8/7(\sqrt{3}-b)$\eqref{ineq2}. First we look at the coefficient for $\gamma$
\begin{align*}
  (a+b-1)(b^2-a^2-3)+8/7(\sqrt{3}-b)(b^2-a^2-1)
\end{align*}
This expression is a cubic in $a$ with negative coefficients for positive powers of $a$ (given that $b\le \sqrt{3}$). Therefore we can take $a=0$ to see that the coefficient is nonpositive for $1\le b\le \sqrt{3}$. However $b$ can be smaller than $1$. Nevertheless $b+a-1>0$, hence $a>1-b$. Therefore if $b<1$ then we should use $a=1-b$, and we get nonpositive coefficient again. Therefore we can put $\gamma=0$. This leads to inequality that is proved using 4 rectangles.

The first two rectangles require substitution $b\to \sqrt{3}-b$ to put equilateral triangle at the lower left corner. Then we take
\begin{itemize}
  \item $(b,a)\in [0,0.2]\times[0,0.12]$
  \item $(b,a)\in [0.2,0.55]\times[0,0.12]$ (here we need to shift $b$)
\end{itemize}

Another two rectangles require two substitutions $b\to b+\sqrt{3}/2$ and $a\to 1/2-a$, so that the half of the equilateral triangle is in the lower left corner. Now we take
\begin{itemize}
  \item $(b,a)\in [0,\sqrt{3}/2]\times[0,0.38]$
  \item $(b,a)\in [0,0.32]\times[0.38,0.5]$ (here we need to shift $b$)
\end{itemize}

Note that there is a slight overlap between the second and fourth rectangle. In fact rectangles 1, 2 and 4 form a long narrow rectangle containing all nearly isosceles cases. 
\subsubsection{Interior triangle}
Finally we check inequality \eqref{ineq3} in the triangle formed by the lines defined by inequalities \eqref{ineq1}, \eqref{ineq2} and \eqref{ineq3}. It is enough to show that inequality \eqref{ineq3} is true at the intersection of the lines given by \eqref{ineq1} and \eqref{ineq2}. Then it must be true on the whole triangle. Therefore we solve the first 2 equations and plug the solution to the last inequality in

\begin{align*}
   (a^2+3)(1-\gamma)+2ab\delta+b^2\gamma= \frac{4\pi^2}{3U_1(a,b)},\\
   (a^2+1)(1-\gamma)+2ab\delta+b^2\gamma= \frac{3\pi^2}{4U_1(a,b)},\\
   ( (a-1/2)^2+3/4)(1-\gamma)+(2a-1)b\delta+b^2\gamma\le\frac{2\pi^2}{3U_1(a,b)}.
\end{align*}

The solution is 
\begin{align*}
  \gamma&=1-\frac{7\pi^2}{24U_1},\\
  \delta&=\pi^2\frac{11+7b^2-7a^2}{48abU_1}-\frac{b}{2a}
\end{align*}
Plugging into the inequality gives
\begin{align*}
  24b^2U_1-\pi^2(11+7b^2+7a^2-4a)\le 0.
\end{align*}
The left hand side is bounded above by
\begin{align*}
  4a^2-80a+3(b^2-3)=4a(a-20)+3(b^2-3)\le 0.
\end{align*}

Hence the triangle is always covered by the halfspaces. Note that it is not clear that we need to have this paragraph, since the triangle might not even be inside of the square. However we cannot easily exclude the possibility that it actually is inside.

We proved the last remaining case, hence all nearly equilateral triangles have simple second Neumann eigenvalue. This ends the proof of \autoref{simple}.

\section*{Acknowledgements}
The work was partially supported by NCN grant 2012/07/B/ST1/03356. 

The author is grateful to Richard Laugesen for invaluable discussions on the topic of the paper, as well as suggested improvements to some arguments. 

\newcommand{\doi}[1]{%
 \href{http://dx.doi.org/#1}{doi:#1}}
\newcommand{\arxiv}[1]{%
 \href{http://front.math.ucdavis.edu/#1}{ArXiv:#1}}
\newcommand{\mref}[1]{%
\href{http://www.ams.org/mathscinet-getitem?mr=#1}{#1}}

\end{document}